\renewcommand\th@plain{\slshape}
\newtheoremstyle{plain}
 {2mm}
 {2mm}
 {\slshape}
 {}
 {\bfseries}
 {.}
 {.5em}
 {}
\theoremstyle{plain}
\newtheorem{theorem}{Theorem}[section]
\newtheorem{lemma}[theorem]{Lemma}
\newtheorem{proposition}[theorem]{Proposition}
\newtheorem{claim}[theorem]{Claim}
\newtheorem*{claim*}{Claim}
\newtheorem{question}[theorem]{Question}
\newtheoremstyle{definition}
 {2mm}
 {2mm}
 {\normalfont}
 {}
 {\bfseries}
 {.}
 {.5em}
 {}
\theoremstyle{definition}
\newtheorem{definition}[theorem]{Definition}
\newtheorem{remark}[theorem]{Remark}
\newtheorem{example}[theorem]{Example}
\newtheorem*{acknowledgements}{Acknowledgements}
\crefname{section}{Section}{Sections}
\crefname{theorem}{Theorem}{Theorems}
\crefname{corollary}{Corollary}{Corollaries}
\crefname{lemma}{Lemma}{Lemmas}
\crefname{lemma}{Lemma}{Lemmas}
\crefname{proposition}{Proposition}{Propositions}
\crefname{claim}{Claim}{Claims}
\crefname{definition}{Definition}{Definitions}
\crefname{notation}{Notation}{Notations}
\crefname{problem}{Problem}{Problems}
\crefname{note}{Note}{Notes}
\crefname{remark}{Remark}{Remarks}
\crefname{example}{Example}{Examples}
\crefname{conjecture}{Conjecture}{Conjectures}
\crefname{question}{Question}{Questions}
\crefname{mainthm}{Theorem}{Theorems}
\crefname{mainprop}{Proposition}{Propositions}
\crefname{enumi}{}{}
\crefname{enumii}{}{}
\crefname{enumiii}{}{}
\numberwithin{equation}{section}
\def\Q{{\mathbb Q}}
\def\R{{\mathbb R}}
\def\Z{{\mathbb Z}}
\def\P{{\mathbb P}}
\def\A{{\mathbb A}}
\def\QQ{\overline{\mathbb Q}}
\def\p{{ \mathfrak{p}}}     
\def\q{{ \mathfrak{q}}}
\def\dd{{ \mathfrak{d}}}     
\def\O{{ \mathcal{O}}}
\def\a{{ \mathfrak{a}}}  
\def\I{{ \mathcal{I}}}
\def\J{{ \mathcal{J}}}
\def\L{{ \mathcal{L}}}
\def\M{{ \mathcal{M}}}
\DeclareMathOperator{\rank}{rank}
\DeclareMathOperator{\Spec}{Spec}
\DeclareMathOperator{\Proj}{Proj}
\DeclareMathOperator{\codim}{codim} 
\DeclareMathOperator{\vol}{vol}
\DeclareMathOperator{\Bl}{Bl}
\renewcommand{\a}{\alpha}
\renewcommand{\b}{\beta}
\newcommand{\g}{\gamma}
\renewcommand{\d}{\delta}
\newcommand{\e}{\varepsilon}
\renewcommand{\l}{\lambda}
\renewcommand{\k}{\kappa}
\newcommand{\G}{\Gamma}
\begin{document}

	\title[Growth of generalized gcd]
	{Growth of generalized greatest common divisors along orbits of self-rational maps on projective varieties} 
	
	\author[Yohsuke Matsuzawa]{Yohsuke Matsuzawa}
	\address{Department of Mathematics, Graduate School of Science, Osaka Metropolitan University, 3-3-138, Sugimoto, Sumiyoshi, Osaka, 558-8585, Japan}
	\email{matsuzaway@omu.ac.jp}

	\begin{abstract} 
		Consider a dominant rational self-map $f$ on a smooth projective variety $X$ defined over $\QQ$.
		We prove that 
		\begin{align}
		\lim_{n \to \infty} \frac{h_{Y}(f^{n}(x))}{h_{H}(f^{n}(x)) } = 0,
		\end{align}
		where $h_{Y}$ is a height 
		associated with a closed subscheme $Y \subset X$ of codimension $c$, $h_{H}$ is any ample height on $X$, and 
		$x \in X(\QQ)$ is a point with well-defined orbit, under the following assumptions:
		(1) either $f$ is a morphism, or $Y$ is pure dimensional, regularly embedded in $X$, and contained in 
		the locus over which all iterates of $f$ are finite;
		(2) the orbit of $x$ is generic;
		(3) $d_{c}(f)^{1/c} < \alpha_{f}(x)$,
		where $d_{c}(f)$ is the $c$-th dynamical degree of $f$ and $ \alpha_{f}(x)$ is the arithmetic degree of $x$.
	\end{abstract}

	\subjclass[2020]{Primary 37P15; 
		Secondary 
		37P55 
	}	
	
	\keywords{Arithmetic dynamics, Generalized greatest common divisors, Height associated with subschemes, Arithmetic degree, Dynamical degree}

	\maketitle
	\tableofcontents

\section{Introduction}\label{sec:intro}

Let $X$ be a smooth projective variety, 
and $f \colon X \dashrightarrow X$ a dominant rational map, both defined over $\QQ$.
For a proper closed subscheme $Y \subset X$, we can define associated global height function $h_{Y}$ (\cite{Sil87subht}),
which is nothing but a Weil height function when $Y$ is a Cartier divisor.
When $Y$ has codimension at least two, the case we are interested in, 
$h_{Y}$ is sometimes called generalized greatest common divisor.
This name may be justified by the following example.
Let $Y = \{(0:0:1)\} \subset \P^{2}_{\QQ}$ and $x = (a:b:c) \in \P^{2}(\QQ)$
with $a,b,c \in \Z$ coprime. Then
\begin{align}
h_{Y}(x) = \log \frac{\max\{|a|, |b|, |c|\}}{ \max\{|a|, |b|\}} + \log \gcd(a,b).
\end{align}

For a point $x \in X(\QQ)$ with well-defined forward $f$-orbit, 
$h_{Y}(f^{n}(x))$ is expected to be very small.
More precisely, given any Weil height function $h_{H}$ associated with an ample divisor $H$ on $X$,
we expect
\begin{align}
\lim_{n \to \infty} \frac{h_{Y}(f^{n}(x))}{h_{H}(f^{n}(x))} = 0 \label{eq:hYhHlimzero}
\end{align}
unless there are any special (geometric) reasons (cf.\ \cite[section 19]{silverman2024dynamical}).
Roughly speaking, this claims that the number of digits of greatest common divisors 
among coordinates of orbits are smaller than the number of digits of coordinates.
There are works \cite{matsuzawa2020vojta,Hu20} that directly study this problem,
but they all assume Vojta's conjecture.
We remark that Silverman pointed out \cite[Example 35]{silverman2024dynamical} that a famous theorem by Bugeaud-Corvaja-Zannier \cite[Theorem 1]{BCZ03}
can be interpreted dynamically.
For example, let $a,b \geq 2$ be multiplicatively independent integers, and 
$f \colon \P^{2}_{\QQ} \longrightarrow \P^{2}_{\QQ}, (x:y:z) \mapsto (ax:by:z)$.
Let $Y = \{(1:1:1)\}$.
Then 
\begin{align}
&\frac{h_{Y}(f^{n}(1:1:1))}{h_{H}(f^{n}(1:1:1))}\\
&=
\frac{1}{\log \max\{|a^{n}|, |b^{n}|, 1\}} 
\bigg(  
\log \frac{\max\{|a^{n}|, |b^{n}|, 1\}}{\max\{ |a^{n}-1|, |b^{n}-1| \}} 
+
\log \gcd( a^{n}-1, b^{n}-1 )
\bigg)
\end{align}
and \cite[Theorem 1]{BCZ03} is equivalent to say that this goes to $0$ as $n \to \infty$.
The key input to the proof of their theorem was Schmidt subspace theorem.

In this paper, we prove a sufficient condition for \cref{eq:hYhHlimzero}.
Our theorem is unconditional, i.e.\ does not assume Vojta's conjecture.
Our strategy is to take advantage of dynamical complexity, and so only applicable to
maps $f$ with non-trivial first dynamical degree. 
Thus we cannot deduce Bugeaud-Corvaja-Zannier's theorem from our one;
the map $f(x:y:z)=(ax:by:z)$ has trivial first dynamical degree $1$.

To state our theorem, let us introduce some notions.

\begin{definition}[Orbits under rational maps]
Let $f \colon X \dashrightarrow X$ be a dominant rational map on a projective variety 
(variety is irreducible and reduced in this paper) defined over an algebraically closed field $k$.
The indeterminacy locus of $f$ is denoted by $I_{f}$.
For a point $x \in X(k)$, we say the forward $f$-orbit is well-defined if
$f^{n}(x) \not\in I_{f}$ for all $n \geq 0$.
The set of $x \in X(k)$ with well-defined forward $f$-orbit is denoted by $X_{f}(k)$.
For $x \in X_{f}(k)$, $O_{f}(x) = \{x, f(x),f^{2}(x),\dots\}$ is called the orbit.
We say the orbit is generic if $O_{f}(x)$ is infinite and $O_{f}(x) \cap Z$ is finite for 
any proper Zariski closed subset $Z \subset X$.
\end{definition}

We recall the definition of dynamical degrees.
There are number of papers that study properties of dynamical degrees.
We refer \cite{Tr20,Da20} for basic properties of dynamical degrees.
(Note also that the recursive inequalities proven in \cite{xie2024algebraic}
imply important properties of dynamical degrees, such as convergence, log concavity, and product formulas.)

\begin{definition}[Dynamical degrees]
Let $f \colon X \dashrightarrow X$ be a dominant rational map on a projective variety 
defined over an algebraically closed field $k$.
Let $H$ be an ample divisor on $X$.
For $i \in \{0,\dots, \dim X\}$, the $i$-th dynamical degree of $f$ is 
\begin{align}
d_{i}(f) = \lim_{n \to \infty} ( \pi_{2,n}^{*}H^{i} \cdot  \pi_{1,n}^{*}H^{\dim X - i} )^{1/n}
\end{align}
where $\pi_{1,n}, \pi_{2,n}$ are the projections from the graph of $f^{n}$:
\begin{equation}
\begin{tikzcd}
\G_{f^{n}} \arrow[d,"\pi_{1,n}",swap] \arrow[rd, "\pi_{2,n}"] & \\
X \arrow[r, "f^{n}", dashed]& X.
\end{tikzcd}
\end{equation}
Note that $d_{\dim X}(f)$ is simply the degree of the function field extension induced by $f$.
\end{definition}

In \cite[Appendix B.1]{matsuzawa-note-ad-dls}, for a dominant rational map $f \colon X \dashrightarrow X$
on a projective variety, we introduced a subset of $X$ over which all iterates of $f$ are ``finite".
We include here the definition, but
refer \cite[Appendix B.1]{matsuzawa-note-ad-dls} for basic properties of this set.
\begin{definition}[Iteratively finite locus]
Let $f \colon X \dashrightarrow X$ be a dominant rational map on a projective variety 
defined over a field $k$.
\begin{enumerate}
\item
Let $\G_{f}$ be the graph of $f$ and $p_{i} \colon \G_{f} \longrightarrow X$ for $i=1,2$ be the projections:
\begin{equation}
\begin{tikzcd}
\G_{f} \arrow[d,"p_{1}", swap] \arrow[rd, "p_{2}"] & \\
X \arrow[r, "f", dashed]& X.
\end{tikzcd}
\end{equation}
We write $X_{f}^{\rm fin}$ the largest open subset of $X$ such that
\begin{align}
\text{$p_{2} \colon p_{2}^{-1}(X_{f}^{\rm fin}) \longrightarrow X_{f}^{\rm fin}$ is finite 
and $p_{2}^{-1}(X_{f}^{\rm fin})\cap p_{1}^{-1}(I_{f}) =  \emptyset$.}
\end{align}
Note that $f|_{X \setminus I_{f}}^{-1}(X_{f}^{\rm fin})$ is isomorphic to $p_{2}^{-1}(X_{f}^{\rm fin})$
via $p_{1}$.
\item
We inductively define $X_{f}^{{\rm fin}, n}$ for $n \geq 0$ as follows.
We set $X_{f}^{{\rm fin}, 0} = X$.
For $n \geq 1$, we set
\begin{align}
X_{f}^{{\rm fin}, n} = X_{f}^{{\rm fin}, n-1} \setminus f^{n-1}(f^{-(n-1)}(X_{f}^{{\rm fin}, n-1}) \setminus X_{f}^{{\rm fin}}).
\end{align}
Here $f^{-(n-1)}(X_{f}^{{\rm fin}, n-1})$ is $f|_{X \setminus I_{f^{n-1}}}^{-(n-1)}(X_{f}^{{\rm fin}, n-1})$ by definition.
By the construction, this is finite over $X_{f}^{{\rm fin}, n-1}$.
Note also that $X_{f}^{{\rm fin}, 1} = X_{f}^{{\rm fin}}$.
\item
We define
\begin{align}
X_{f}^{\rm back} = \bigcap_{n \geq 0} X_{f}^{{\rm fin}, n}.
\end{align}
\end{enumerate}
\end{definition}

When the dynamical system is defined over $\QQ$,
the growth rate of Weil height function along the orbit of a given point
is called arithmetic degree. 
\begin{definition}[Arithmetic degree]
Let $f \colon X \dashrightarrow X$ be a dominant rational map on a projective variety 
defined over $\QQ$.
Let $H$ be an ample divisor on $X$ and $h_{H}$ be an associated Weil height function on $X(\QQ)$.
For a point $x \in X_{f}(\QQ)$, 
\begin{align}
\alpha_{f}(x) = \lim_{n \to \infty} \max\{1, h_{H}(f^{n}(x)) \}^{1/n}
\end{align}
is called the arithmetic degree of $x$ provided the limit exists.
This is independent of the choice of $H$ and $h_{H}$ (even the limsup and liminf are independent).
If $f$ is a morphism \cite{KS16a} or $O_{f}(x)$ is generic \cite{matsuzawa-note-ad-dls}, then it is known that the limit exists. 
\end{definition}
For the basic properties of arithmetic degree, see \cite{matsuzawa2023recent}
and references therein.

Now we state our main theorem.

\begin{theorem}[Main theorem]\label{thm:gcdgrowth}
Let $X$ be a smooth projective variety of dimension $N$ over $\QQ$.
Let $f \colon X \dashrightarrow X$ be a dominant rational map.
Let $Y \subset X$ be a proper closed subscheme of dimension $l$.
If $f$ is not a morphism, assume further that $Y$ is pure dimensional,
the inclusion $Y \subset X$ is a regular embedding, and $Y \subset X_{f}^{\rm back}$.
Then for any $x \in X_{f}(\QQ)$, if
\begin{itemize}
\item
$O_{f}(x)$ is generic;
\item
$d_{N-l}(f)^{1/(N-l)} < \alpha_{f}(x)$,
\end{itemize}
then for any Weil height function $h_{H}$ associated with an ample divisor $H$ on X, we have
\begin{align}
\lim_{n \to \infty}\frac{h_{Y}(f^{n}(x))}{h_{H}(f^{n}(x))} = 0.
\end{align}
\end{theorem}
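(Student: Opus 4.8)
Write $c=N-l=\codim_{X}Y$. The plan is to establish, for every $\e>0$ and all $n\gg 0$, the two bounds
\[
h_{H}(f^{n}(x))\ \geq\ (\alpha_{f}(x)-\e)^{n}
\qquad\text{and}\qquad
h_{Y}(f^{n}(x))\ \leq\ \bigl(d_{c}(f)^{1/c}+\e\bigr)^{n},
\]
and then to pick $\e$ so small that $d_{c}(f)^{1/c}+\e<\alpha_{f}(x)-\e$, which is possible by hypothesis (3); the ratio is then at most $\bigl((d_{c}(f)^{1/c}+\e)/(\alpha_{f}(x)-\e)\bigr)^{n}\to 0$. The first bound is immediate: by hypothesis (2), or because $f$ is a morphism, the arithmetic degree $\alpha_{f}(x)=\lim_{n}\max\{1,h_{H}(f^{n}(x))\}^{1/n}$ exists, so $h_{H}(f^{n}(x))\geq(\alpha_{f}(x)-\e)^{n}$ for large $n$. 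Hence the whole content is the upper bound $\limsup_{n}h_{Y}(f^{n}(x))^{1/n}\leq d_{c}(f)^{1/c}$.

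The first step is to rewrite $h_{Y}(f^{n}(x))$ in terms of the scheme–theoretic pullbacks of $Y$. The assumptions in (1) — $f$ a morphism, or $Y$ pure dimensional, regularly embedded, and contained in $X_{f}^{\rm back}$ — guarantee that all iterates of $f$ are finite over a fixed open neighbourhood of $Y$, and this has two consequences. First, the pullback ideal sheaves cut out closed subschemes $(f^{n})^{*}Y$ that stay of pure codimension $c$ for all $n$, with no drop of codimension coming from indeterminacy or exceptional loci. Second, choosing on the structure used to define $h_{Y}$ a metric compatibly pulled back by the finite maps $f^{n}$ over that neighbourhood, one obtains
\[
h_{Y}(f^{n}(x))\ =\ h_{(f^{n})^{*}Y}(x)+O(1)
\]
with an \emph{absolute} $O(1)$ (when $f^{n}(x)$ leaves the neighbourhood both sides are $O(1)$, since then $f^{n}(x)$ is bounded away from $Y$); for a morphism $f$ this is just the functoriality of heights of subschemes, and the role of $X_{f}^{\rm back}$ is precisely to make it available for rational $f$. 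So it suffices to prove $\limsup_{n}h_{(f^{n})^{*}Y}(x)^{1/n}\leq d_{c}(f)^{1/c}$.

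The second step bounds the proximity of the \emph{fixed} point $x$ to the moving codimension-$c$ subscheme $Z_{n}:=(f^{n})^{*}Y$ (note $x\notin Z_{n}$ by genericity of the orbit). Two features of $Z_{n}$ enter. Its geometric degree is $\deg_{H}Z_{n}=(f^{n})^{*}[Y]\cdot H^{N-c}$, because the pullback stays of pure codimension $c$; and by the very definition of the $c$-th dynamical degree, pullback on codimension-$c$ classes grows at rate $d_{c}(f)$, so $\deg_{H}Z_{n}\leq C_{\e}(d_{c}(f)+\e)^{n}$. Its arithmetic complexity (a suitable height of the cycle $Z_{n}$) grows at most singly exponentially in $n$, by an effective Northcott-type estimate for iterated pullbacks along the fixed map $f$. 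A Liouville/arithmetic–Bézout type inequality — bounding $h_{Z}(x)$, for a pure-codimension-$c$ subscheme $Z$ with $x\notin Z$ that is locally cut out by a length-$c$ regular sequence, by $\deg_{H}(Z)^{1/c}$ times a correction term that is subexponential in $\log$ of the arithmetic complexity of $Z$ — then gives $h_{Z_{n}}(x)\leq C_{\e}(d_{c}(f)^{1/c}+\e)^{n}$, which is what is needed. Here the exponent $1/c$ is inherited from the codimension of $Y$ through the length-$c$ regular sequence: a codimension-$c$ subscheme of degree $\delta$ behaves, for proximity purposes, like $c$ hypersurfaces of degree $\delta^{1/c}$.

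I expect this last step to be the main obstacle. The crude arithmetic–Bézout bound $h_{Z}(x)\lesssim\deg_{H}(Z)(1+h(x))+\operatorname{ht}(Z)$ is far too lossy on $Z=Z_{n}$: it carries $\deg_{H}(Z)$ to the power $1$ rather than $1/c$, and $\operatorname{ht}(Z_{n})$ can itself grow faster than $d_{c}(f)^{1/c}$ to the $n$-th power — for instance when $f$ restricts to a positive-entropy automorphism on a factor of $X$, where $d_{1}(f)^{c}$ far exceeds $d_{c}(f)$ — so it only reproduces the trivial estimate $h_{Y}(f^{n}(x))\lesssim h_{H}(f^{n}(x))$. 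What is really required is the sharper Diophantine statement in which the high-height part of $Z_{n}$ enters only through its logarithm, combined with the codimension-$c$ gain, so that the final rate is $d_{c}(f)^{1/c}$ and not $d_{1}(f)$; producing and applying such an estimate — unconditionally, i.e.\ without Vojta's conjecture — is the delicate technical point. The hypotheses that $Y$ be pure dimensional, regularly embedded and contained in $X_{f}^{\rm back}$ are exactly what make the supporting geometry (codimension-preservation under pullback, the length-$c$ local structure of $(f^{n})^{*}Y$, and the compatible-metric formula of the first step) available; for a morphism $f$ all of this is automatic, which is why that case needs no assumption on $Y$.
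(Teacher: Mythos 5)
Your plan hinges on two reductions, both of which go beyond what is actually needed and one of which is left as a genuine gap. First, you propose to show the \emph{absolute} bound $\limsup_{n}h_{Y}(f^{n}(x))^{1/n}\leq d_{c}(f)^{1/c}$. This is a strictly stronger statement than the theorem and is not what the paper establishes: the paper's argument only gives $h_{Y}(f^{n}(x))\leq \epsilon\, h_{H}(f^{n}(x))+O(1)$ directly (with $\epsilon$ arbitrary but the $O(1)$ depending on $\epsilon$), which is weaker than an exponential bound by $d_{c}(f)^{1/c}$ but entirely sufficient. Second, you then reduce that stronger claim to a ``Liouville/arithmetic--B\'{e}zout'' inequality controlling $h_{Z_{n}}(x)$ for a fixed $x$ and moving codimension-$c$ subschemes $Z_{n}=(f^{n})^{*}Y$, with the degree entering to the power $1/c$ and the height of $Z_{n}$ entering only logarithmically. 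You correctly flag this as the main obstacle and it is exactly that: no such unconditional Diophantine estimate is produced, and without it your argument does not close. The auxiliary claim that $h_{Y}(f^{n}(x))=h_{(f^{n})^{*}Y}(x)+O(1)$ with an $O(1)$ \emph{uniform in $n$} is also unsupported; standard functoriality of subscheme heights gives a constant that depends on the map $f^{n}$, so a priori on $n$.

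The paper's route avoids both issues. It never confronts the fixed point $x$ with the full sequence $(f^{n})^{*}Y$. Instead, the key is \cref{prop:ggcdpullbackbound}: for any $\epsilon>0$ and all sufficiently large but \emph{fixed} $k$, one has a uniform inequality $h_{f^{-k}(Y)}\leq(d_{c}(f)^{1/c}+\epsilon)^{k}h_{H}+O_{k}(1)$ on a Zariski open set, obtained not from Diophantine approximation but from a sheaf-cohomology count: one shows $H^{0}(X,\mathcal{I}^{r}\otimes\mathcal{O}_{X}(mH))\neq 0$ for suitable $r,m$ by bounding $h^{0}(X,\mathcal{O}_{X}(mH)\otimes\mathcal{O}_{X}/\mathcal{I}^{r})$ via generic hyperplane sections, Segre classes, and an intersection-theoretic estimate of $\tau(f^{-k}(Y)\cap H_{1\cdots i},H_{1\cdots i})$ by $((f^{k})^{*}(H^{N-i})\cdot H^{i})$ (\cref{lem:glsecthick,prop:existencevansectau,prop:boundoftaudyn}). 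Any nonzero section then gives an effective $D\sim mH$ containing the subscheme cut out by $\mathcal{I}^{r}$, hence $r h_{f^{-k}(Y)}\leq m h_{H}+O(1)$ off $D$. With $k$ fixed, the proof writes $h_{Y}(f^{n}(x))$ as $h_{f^{-mk}(Y)}$ at an earlier orbit point and then absorbs the resulting $h_{H}$ at that point into $h_{H}(f^{n}(x))$ via the multiplicative growth estimate from \cite[Lemma 2.4]{matsuzawa-note-ad-dls}, using $d_{c}(f)^{1/c}<\alpha_{f}(x)$; the genericness of the orbit ensures the finitely many excluded closed sets do not interfere. So the approach you sketch, while aiming at the right rate $d_{c}(f)^{1/c}$, substitutes an unproved arithmetic inequality for the paper's constructive, geometric production of small effective divisors, and that substitution is where the argument fails.
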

Recall that a closed immersion $Y \subset X$ is called regular embedding
if for any $x \in Y$, the ideal $I \subset \O_{X,x}$ of $Y$ at $x$ is generated by
a regular sequence.

\begin{remark}
We always have $ \alpha_{f}(x) \leq d_{1}(f)$ (for morphisms \cite{KS16a}, for rational maps \cite{Ma20a}).
Thus it is necessary to have $d_{N-l}(f)^{1/(N-l)} < d_{1}(f)$ so that 
\cref{thm:gcdgrowth} has content.
By the log concavity of dynamical degrees, $d_{N-l}(f)^{1/(N-l)} \leq d_{1}(f)$ always holds.
The equality holds for polarized endomorphisms 
(e.g.\ non-isomorphic surjective self-morphisms on projective spaces) for example, 
and thus we unfortunately can not apply our theorem to such morphisms.
\end{remark}

\begin{remark}
The Kawaguchi-Silverman conjecture predicts that $ \alpha_{f}(x) = d_{1}(f)$
for $x \in X_{f}(\QQ)$ with $O_{f}(x)$ being Zariski dense.
Also, the dynamical Mordell-Lang conjecture says if the orbit is Zariski dense, then it is generic.
Therefore, if these conjectures hold, the condition ``$O_{f}(x)$ is generic and $d_{N-l}(f)^{1/(N-l)} < \alpha_{f}(x)$"
is equivalent to ``$O_{f}(x)$ is Zariski dense and $d_{N-l}(f)^{1/(N-l)} < d_{1}(f)$".
\end{remark}

\begin{remark}
Without the condition $Y \subset X_{f}^{\rm back}$, the theorem has a counter-example.
See \cref{ex:backnonfinCE}.
\end{remark}

\begin{remark}
Compare \cref{thm:gcdgrowth} with \cite[Theorem 1.6]{matsuzawa2020vojta}.
When $f$ is a morphism, \cite[Theorem 1.6]{matsuzawa2020vojta} says Vojta's conjecture 
implies $h_{Y}(f^{n}(x))/h_{H}(f^{n}(x)) \to 0$ if  $\codim Y \geq 2$, $O_{f}$ is generic, and
$e(Y) < \alpha_{f}(x)$, where $e(Y)$ is a quantity that measures how $f^{n}$ ramifies 
along preimages of $Y$.
Although, there is no logical implications between these two theorems,
\cite[Theorem 1.6]{matsuzawa2020vojta} can be applicable in a meaningful way to endomorphisms on projective spaces.
\end{remark}

One of the key ingredients of the proof is to find an upper bound of
$h_{f^{-k}(Y)}$ for large $k$.
The idea is as follows. Let $\I$ be the ideal sheaf of $f^{-k}(Y) \subset X$.
If $H^{0}(X, \I^{r} \otimes \O_{X}(mH)) \neq 0$ for some $r, m \geq 1$,
then any non-zero global section would correspond to an effective divisor $D$ linearly equivalent to $mH$
and containing the closed subscheme defined by $\I^{r}$.
Then we get an inequality
\begin{align}
rh_{f^{-k}(Y)} \leq h_{D} + O(1) = mh_{H} + O(1)
\end{align}
on $(X \setminus D)(\QQ)$.
Thus our task is to figure out how small $m/r$ can be for such $r,m$. 
This idea of bounding generalized gcd is appeared in the proof of \cite[Theorem 3.1]{GP24}.
They give an upper bound of generalized gcd associated with reduced zero dimensional subschemes 
of smooth projective varieties (their theorem does not involve any dynamics).
When $\dim Y =0$, even though $f^{-k}(Y)$ is not necessarily reduced, their proof with
minor modification would give a sufficient upper bound for our purpose.
However, when $\dim Y > 0$, the situation is different and 
the problem becomes highly non-trivial. 
The main body of this paper is devoted to treat this problem.
At this moment, we do not know any clean upper bound of given $h_{Y}$ in general.  
Nevertheless, taking advantage of complexity of dynamical systems,  
we can get a simple upper bound of $h_{f^{-k}(Y)}$ as in \cref{prop:ggcdpullbackbound}.

We remark that there is a paper \cite{BARRIOS_2025}
that claims a generalization of \cite[Theorem 3.1]{GP24} to higher dimensional subvarieties.
However, the key lemma \cite[Lemma 2.1]{BARRIOS_2025} has counter-examples (see below), and hence
we are not sure if the proof of the claimed upper bound of generalized gcd \cite[Theorem 1.2]{BARRIOS_2025}
is correct.
That said, we would like to note that the idea in \cite{BARRIOS_2025} served as a useful inspiration for our approach to obtaining the present result.

\begin{example}
Let $\pi \colon S = \Bl_{P}\P^{2} \longrightarrow \P^{2}$
be the blow up of $\P^{2}$ at a closed point $P$, everything over an algebraically closed field.
Let $E \subset S$ be the exceptional divisor and set $Z = mE$ with $m \geq 2$.
Then by Riemann-Roch, we have
\begin{align}
\chi(\O_{Z}) &= - \frac{1}{2} (mE \cdot (mE + K_{S})) \\
&= - \frac{1}{2} (mE \cdot (mE + \pi^{*}K_{\P^{2}} +E)) = \frac{m(m+1)}{2}.
\end{align}
Thus by Riemann-Roch for singular curves (cf.\ \cite[Theorem 3.17]{Liu}), for any ample line bundle $ \mathcal{A}$, we have
\begin{align}
\dim H^{0}(Z, \mathcal{A}) = \deg \mathcal{A} + \frac{m(m+1)}{2} + \dim H^{1}(Z, \mathcal{A}) > \deg \mathcal{A} + 1.
\end{align}
\end{example}

\noindent
{\bf convention}
\begin{itemize}
\item A  \emph{variety} over $k$ is a separated scheme of finite type over $k$ which is irreducible and reduced;
\item
For a number field $K$, let $M_{K}$ denote the set of absolute values normalized as in \cite[p11 (1.6)]{BG06}.
Namely, Namely, if $K=\Q$, then $M_{\Q}=\{|\ |_{p} \mid \text{$p=\infty$ or a prime number} \}$ with
\begin{align}
 &|a|_{\infty} =
 \begin{cases}
 a \quad \text{if $a\geq0$}\\
 -a \quad \text{if $a<0$}
 \end{cases}
 \\
 & |a|_{p} = p^{-n} \quad \txt{if $p$ is a prime and  $a=p^{n}\frac{k}{l}$ where\\ $k,l$ are non zero integers coprime to $p$.}
\end{align}
For a number field $K$, $M_{K}$ consists of the following absolute values:
\begin{align}
|a|_{v} = |N_{K_{v}/\Q_{p}}(a)|_{p}^{1/[K:\Q]} 
\end{align}
where $v$ is a place of $K$ which restricts to $p = \infty$ or a prime number.
We use this set of absolute values to define height functions associated with subschemes.

\item
Let $X$ be a projective variety over a number field $K$.
Let $Y \subset X$ be a closed subscheme.
A local height function associated with $Y$ is denoted by $\l_{Y,v}$ or $\{ \l_{Y,v}\}_{v \in M_{K}}$.

\item
Let $X$ be a projective variety over $\QQ$.
Let $D$ be a Cartier divisor on $X$.
Let $Y \subset X$ be a closed subscheme.
A Weil height function associated with $D$ is denoted by $h_{D}$,
and similarly, a global height function associated with $Y$ is denoted by $h_{Y}$.
These are determined up to bounded function. When we write $h_{D}$ or $h_{Y}$
without mentioning anything, this means we choose a function from the up to bounded equivalence class. 
Unless $Y$ is an effective Cartier divisor, $h_{Y}$ is defined only on $(X \setminus Y)(\QQ)$.
As usual, we set $h_{Y}= \infty$ on $Y(\QQ)$.

\end{itemize}
For basic properties of height functions, \cite{BG06,HS00,La83} are standard references.
For height associated with subschemes, see the original article \cite{Sil87subht}, or \cite{matsuzawa2020height} for example.

\begin{acknowledgements}
This work has been done while the author's research stay at Harvard University.
We thank their hospitality, especially Laura DeMarco for hosting the author.
The author is supported by JSPS KAKENHI Grant Number JP22K13903 and 23KK0252.
\end{acknowledgements}

\section{Upper bound of generalized gcd and proof of the main theorem}

The goal of this section is to prove \cref{thm:gcdgrowth}.
As explained in the introduction, the idea to find an upper bound of $h_{Y}$ in terms of $h_{H}$
is as follows. Let $\I$ be the ideal sheaf of $Y \subset X$.
If $H^{0}(X, \I^{r} \otimes \O_{X}(m H) ) \neq 0$, we get $r h_{Y} \leq mh_{H} + O(1)$ on a Zariski open set of $X$.
By the following exact sequence 
\begin{align}
0 \longrightarrow \I^{r} \otimes \O_{X}(m H) \longrightarrow \O_{X}(mH) \longrightarrow \O_{X}(mH) \otimes \O_{X} / \I^{r} \longrightarrow 0,
\end{align}
we have
\begin{align}
&\dim H^{0}(X, \I^{r} \otimes \O_{X}(m H) ) \\
&\geq \dim H^{0}(X, \O_{X}(mH)) - \dim H^{0}(X, \O_{X}(mH) \otimes \O_{X} / \I^{r}).
\end{align}
The growing rate of $\dim H^{0}(X, \O_{X}(mH))$ when $m \to \infty$ is well understood: 
it is controlled by the self-intersection number $(H^{\dim X})$.
The first task is to find an upper bound of $\dim H^{0}(X, \O_{X}(mH) \otimes \O_{X} / \I^{r})$ for varying $r$ and $m$.
Then we apply it to $f^{-n}(Y)$ in place of $Y$ and relate the upper bound with dynamical degrees of $f$.

\vspace{1.5em}

Let $X$ be a projective variety over a field $k$
and $ \mathcal{L}$ be an invertible $\O_{X}$-module.
The complete linear system associated with $ \mathcal{L}$ is denoted by
$| \mathcal{L}|$. (It is $(H^{0}(X, \mathcal{L}) \setminus \{0\})/k^{\times}$ as a set and we equip it with the Zariski topology.)
Let  $ \dd \subset | \mathcal{L}|$ be a sub linear system and $l \in \Z_{\geq 0}$.
We say a property $P$ of tuple $(H_{1},\dots , H_{l}) \in \dd^{l}$ holds for 
a general sequence $H_{1}, \dots , H_{l} \in \dd$ when the following holds.
There are families of non-empty Zariski open subsets of $\dd$
\begin{align}
U_{1}, \{ U_{2, H_{1}} \}_{H_{1} \in U_{1}}, \dots, 
\{ U_{l, H_{1},\dots, H_{l-1}}\}_{(H_{1}, \dots, H_{l-1}) \in U_{1} \times U_{2, H_{1}} \times \cdots \times U_{l-1, H_{1},\dots, H_{l-2}}}
\end{align} 
such that if a sequence $(H_{1},\dots , H_{l}) \in \dd^{l}$ satisfies
\begin{align}
H_{1} \in U_{1}, H_{2} \in U_{2, H_{1}}, \dots, H_{l} \in U_{l, H_{1}, \dots, H_{l-1}},
\end{align}
then $P$ holds.
Note that if $k$ is infinite, then finite intersection of non-empty Zariski open subsets of $\dd$
is non-empty. Therefore, when $k$ is infinite, if there are properties $P_{1}, \dots, P_{s}$ of tuples $(H_{1},\dots , H_{l}) \in \dd^{l}$
that hold for a general sequence $H_{1}, \dots , H_{l} \in \dd$, all of them simultaneously holds for a general sequence $H_{1}, \dots , H_{l} \in \dd$.

For a projective scheme $X$ over a field $k$ and a coherent sheaf $ \mathcal{F}$,
we write $h^{0}(X, \mathcal{F})= \dim_{k}H^{0}(X, \mathcal{F})$, the dimension as a $k$-linear space.

\vspace{1em}

\begin{lemma}\label{lem:glsecthick}
Let $k$ be an infinite field.
Let $X$ be a projective variety over $k$ of dimension $N \geq 0$.
Let $Y \subset X$ be a proper closed subscheme of pure dimension $l$ with ideal sheaf $ \I \subset \O_{X}$.
Let $ \mathcal{L}$ be a very ample invertible $\O_{X}$-module on $X$
and $\dd \subset |\L|$ be a linear system that induces a closed immersion.
Then for a general sequence $H_{1}, \dots, H_{l} \in \dd$, we have
$H_{1\cdots i}:= H_{1} \cap \dots \cap H_{i}$ is irreducible and reduced of dimension $N-i$,
$Y \cap H_{1\cdots i}$ is pure dimension $l - i$, and
\begin{align}
h^{0}(X, \mathcal{L}^{\otimes m} \otimes \O_{X}/\I^{r}) \leq \sum_{i=0}^{l} \binom{m+i-1}{i} h^{0}(H_{1\cdots i}, \O_{H_{1\cdots i}}/\I_{i}^{r})
\end{align}
for all $m, r \in \Z_{\geq 1}$,
where $\I_{i} \subset \O_{H_{1\cdots i}}$ is the ideal sheaf of $Y \cap H_{1\cdots i} \subset H_{1\cdots i}$.
Note that $H_{1\cdots i} = X$ when $i=0$ by convention.
\end{lemma}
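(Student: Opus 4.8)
The plan is to argue by induction on $l=\dim Y$, shrinking the ambient variety one dimension at a time by a general member of $\dd$.

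\textbf{Base case.} When $l=0$ the sheaf $\O_X/\I^r$ is supported on the finite set $\Supp Y$, so $\L^{\otimes m}$ restricts trivially to the zero-dimensional scheme cut out by $\I^r$ and $h^0(X,\L^{\otimes m}\otimes\O_X/\I^r)=h^0(X,\O_X/\I^r)$ for every $m$. This is exactly the asserted inequality, since only the $i=0$ term occurs and $\binom{m-1}{0}=1$, and there are no $H_i$ to produce.

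\textbf{Inductive step.} Assume $l\ge 1$, so $N\ge l+1\ge 2$. First I would invoke Brodmann's theorem to see that $\bigcup_{r\ge 1}\mathrm{Ass}(\O_X/\I^r)$ is a finite set, and let $S$ be a finite set of points containing it and the generic points of the components of $Y$. Since $\dd$ induces a closed immersion it is base-point free, so a general $H_1\in\dd$ avoids $S$, and Bertini's theorems make $H_1$ in addition irreducible and reduced of dimension $N-1$. Writing $s\in H^0(X,\L)$ for a defining section of $H_1$, the fact that $H_1$ meets no associated point of any $\O_X/\I^r$ makes $s$ a nonzerodivisor on $\O_X/\I^r$ for all $r$; hence $\mathrm{Tor}_1^{\O_X}(\O_X/\I^r,\O_{H_1})=0$, and since $\I^r\O_{H_1}=\I_1^r$ with $\I_1=\I\O_{H_1}$ the ideal sheaf of $Y\cap H_1$ in $H_1$, tensoring $0\to\L^{-1}\xrightarrow{\ s\ }\O_X\to\O_{H_1}\to 0$ with $\L^{\otimes m}\otimes\O_X/\I^r$ produces a short exact sequence
\[
0\longrightarrow\L^{\otimes m-1}\otimes\O_X/\I^r\xrightarrow{\ s\ }\L^{\otimes m}\otimes\O_X/\I^r\longrightarrow\L^{\otimes m}|_{H_1}\otimes\O_{H_1}/\I_1^r\longrightarrow 0 .
\]
Taking global sections and telescoping from $m$ down to $0$ would give, for all $m,r\ge 1$,
\[
h^0(X,\L^{\otimes m}\otimes\O_X/\I^r)\ \le\ h^0(X,\O_X/\I^r)+\sum_{j=1}^{m}h^0\bigl(H_1,\L^{\otimes j}|_{H_1}\otimes\O_{H_1}/\I_1^r\bigr).
\]

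Next I would note that $s$ is also a nonzerodivisor on $\O_Y$, so by Krull's principal ideal theorem and catenariness $Y\cap H_1$ is pure of dimension $l-1$ and is a proper closed subscheme of the $(N-1)$-dimensional projective variety $H_1$; moreover $\dd|_{H_1}$ still induces a closed immersion of $H_1$. Applying the lemma inductively to $(H_1,\,Y\cap H_1,\,\L|_{H_1},\,\dd|_{H_1})$ then furnishes general $H_2,\dots,H_l\in\dd$ for which every $H_{1\cdots i}$ is irreducible and reduced of dimension $N-i$, every $Y\cap H_{1\cdots i}$ is pure of dimension $l-i$, and, after reindexing the resulting bound for $H_1$,
\[
h^0\bigl(H_1,\L^{\otimes j}|_{H_1}\otimes\O_{H_1}/\I_1^r\bigr)\ \le\ \sum_{i=1}^{l}\binom{j+i-2}{i-1}\,h^0\bigl(H_{1\cdots i},\O_{H_{1\cdots i}}/\I_i^r\bigr)
\]
for all $j,r\ge 1$. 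Substituting into the previous display, interchanging the two summations, and applying the hockey-stick identity $\sum_{j=1}^{m}\binom{j+i-2}{i-1}=\binom{m+i-1}{i}$, the bound collapses to $\sum_{i=0}^{l}\binom{m+i-1}{i}h^0(H_{1\cdots i},\O_{H_{1\cdots i}}/\I_i^r)$ — the $i=0$ term being $h^0(X,\O_X/\I^r)$ since $\binom{m-1}{0}=1$ — which is the claim. The finitely many generality conditions imposed at each level package exactly into the nested family of non-empty open subsets in the definition of a general sequence.

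\textbf{Expected main difficulty.} The delicate part is the two uniformities needed so that a single choice of $H_1,\dots,H_l$ works for all $m$ and all $r$ at once: the finiteness of $\bigcup_{r\ge1}\mathrm{Ass}(\O_X/\I^r)$, which (via Brodmann's theorem) lets one general hyperplane be a nonzerodivisor on every thickening $\O_X/\I^r$ simultaneously, and the preservation of irreducibility and reducedness of all the iterated sections $H_{1\cdots i}$ over an arbitrary infinite field, for which one must appeal to the appropriate forms of Bertini's theorem. Once these are secured, the cohomological core — the restriction sequence, the telescoping, and the binomial identity — is routine.
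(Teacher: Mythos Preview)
Your proof is correct and follows essentially the same route as the paper: induction on $l$, cut by a general hyperplane $H_1$ that is a nonzerodivisor on every $\O_X/\I^r$, telescope, apply the inductive hypothesis on $H_1$, and collapse the double sum with the hockey-stick identity. The only difference is that where you invoke Brodmann's theorem for the finiteness of $\bigcup_{r\ge 1}\mathrm{Ass}(\O_X/\I^r)$, the paper instead proves a small lemma directly: the associated primes of the associated graded ring $\bigoplus_{r\ge 0}\I^r/\I^{r+1}$ pull back to finitely many primes, and any element avoiding these is regular on every $\O_X/\I^r$ by climbing the filtration---so the paper's argument is self-contained, while yours cites a standard result to the same effect.
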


We prepare some lemmas.

\begin{lemma}\label{lem:quotreg}
Let $A$ be a Noetherian ring and $I \subset A$ be an ideal. 
Then there are finitely many prime ideals $\p_{1}, \dots, \p_{n} \subset A$
such that for all $a \in A \setminus (\p_{1} \cup \cdots \cup \p_{n}) $,
$a$ is $A/I^{r}$-regular for all $r \geq 1$.
\end{lemma}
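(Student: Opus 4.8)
The statement says: for a Noetherian ring $A$ and ideal $I$, there exist finitely many primes $\p_1,\dots,\p_n$ such that any $a$ outside their union is $A/I^r$-regular simultaneously for all $r\ge 1$. The key observation is that being a zerodivisor on $A/I^r$ means lying in some associated prime of $A/I^r$, so I want to control $\bigcup_{r\ge 1}\operatorname{Ass}(A/I^r)$. The plan is to show this union is contained in a finite set of primes; these will be the $\p_i$.

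The main tool is the theory of the associated graded ring and Ratliff's theorem (or Brodmann's theorem) on the asymptotic stability of associated primes. First I would recall Brodmann's result: for a Noetherian ring $A$ and ideal $I$, the set $\operatorname{Ass}(A/I^r)$ is independent of $r$ for all $r\gg 0$, say equal to a fixed finite set for $r\ge r_0$. This already handles all large $r$. Then for the finitely many remaining values $r = 1,\dots,r_0-1$, each $\operatorname{Ass}(A/I^r)$ is a finite set since $A/I^r$ is Noetherian. Taking the union over $r=1,\dots,r_0-1$ together with the stable set gives a finite collection of primes $\p_1,\dots,\p_n$, namely $\bigcup_{r\ge 1}\operatorname{Ass}(A/I^r) = \bigcup_{r=1}^{r_0}\operatorname{Ass}(A/I^r)$, which is finite. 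If $a\in A\setminus(\p_1\cup\cdots\cup\p_n)$, then for every $r\ge 1$, $a$ avoids all associated primes of $A/I^r$, hence $a$ is a nonzerodivisor on $A/I^r$, i.e.\ $A/I^r$-regular. That is exactly the claim.

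Alternatively, if one wishes to avoid invoking Brodmann's theorem as a black box, one can argue via the Rees algebra $R = A[It] = \bigoplus_{r\ge 0} I^r t^r$, which is a Noetherian $A$-algebra, and the associated graded ring $G = \bigoplus_{r\ge 0} I^r/I^{r+1}$. A prime of $A$ is associated to some $A/I^r$ essentially when it is the contraction of an associated prime of a suitable graded module over these finitely generated algebras; Noetherianity of $R$ and $G$ forces the relevant set of such contractions to be finite. The cleanest route, though, is simply to cite Brodmann (or McAdam's book on asymptotic prime divisors), so I would phrase the proof around that.

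The only mild obstacle is making sure the finitely many small-exponent cases are genuinely harmless — but each $A/I^r$ with $r$ fixed is Noetherian, so $\operatorname{Ass}(A/I^r)$ is automatically finite, and there are only finitely many such $r<r_0$; so there is really no obstacle beyond correctly quoting the asymptotic stability statement. The main content is entirely encapsulated in Brodmann's theorem, and the rest is bookkeeping with associated primes and the standard fact that the nonzerodivisors on a module $M$ over a Noetherian ring are exactly the elements outside $\bigcup_{\p\in\operatorname{Ass}(M)}\p$.
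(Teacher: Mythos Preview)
Your argument is correct: invoking Brodmann's theorem on the asymptotic stability of $\operatorname{Ass}_A(A/I^r)$ gives immediately that $\bigcup_{r\ge 1}\operatorname{Ass}_A(A/I^r)$ is finite, and any $a$ outside this finite union of primes is $A/I^r$-regular for every $r$.

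The paper, however, takes the more elementary route you sketch only in passing. It works directly with the associated graded ring $G=\bigoplus_{r\ge 0} I^r/I^{r+1}$, which is a finitely generated (hence Noetherian) $A$-algebra, and takes $\p_1,\dots,\p_n$ to be the contractions to $A$ of the associated primes of $G$. If $a$ avoids these, then multiplication by $a$ is injective on $G$, hence on each graded piece $I^r/I^{r+1}$; regularity on $A/I^r$ for all $r$ then follows by induction from the exact sequences $0\to I^r/I^{r+1}\to A/I^{r+1}\to A/I^r\to 0$ and the snake lemma. This avoids citing Brodmann's theorem as a black box and is entirely self-contained. Your approach is shorter if one is willing to quote Brodmann, but the paper's argument makes the mechanism transparent and in fact reproves (a weak form of) the stability you invoke.
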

\begin{proof}
When $I = A$, all $a \in A$ is $A/I^{r}$-regular and we are done.
Suppose $I \neq A$.
Let $R = \oplus_{r\geq 0} I^{r}/I^{r+1}$.
Then $R$ has a natural $A$-algebra structure and it is finitely generated over $A$.
Thus $R$ is also Noetherian ring.
Let $\p_{1}, \dots , \p_{n} \subset A$ be the inverse images of associated primes of $R$.
We claim that for all $a \in A \setminus (\p_{1} \cup \cdots \cup \p_{n}) $, $a$ is $A/I^{r}$-regular for all $r \geq 1$.
First note that $a$ is $A/I$-regular since $A/I \subset R$ and $a$ is $R$-regular by definition of $\p_{1}, \dots, \p_{n}$.
Let $r \geq 1$ and suppose $a$ is $A/I^{r}$ regular.
Consider the exact sequence
\begin{equation}
\begin{tikzcd}
0 \arrow[r] & I^{r}/I^{r+1} \arrow[r] & A/I^{r+1} \arrow[r] & A/I^{r} \arrow[r] & 0.
\end{tikzcd}
\end{equation}
Multiplication by $a$ is injective on $I^{r}/I^{r+1}$ since $a$ is $R$-regular.
It is injective on $A/I^{r}$ by assumption. Thus it is injective on $A/I^{r+1}$ by snake lemma.
\end{proof}

\begin{lemma}\label{lem:generalhpsecexact}
Let $X$ be a projective variety over an infinite field $k$.
Let $ \mathcal{L}$ be an invertible $\O_{X}$-module and
$\dd \subset | \mathcal{L}|$ be a base point free linear system.
Let $\I \subset \O_{X}$ be a coherent ideal sheaf.
Then for general $H \in \dd$, 
\begin{equation}
\begin{tikzcd}[column sep=small] 
0 \arrow[r] & \mathcal{L}^{\otimes m-1} \otimes \O_{X}/\I^{r} \arrow[r,"H"] &  \mathcal{L}^{\otimes m} \otimes \O_{X}/\I^{r}  \arrow[r] &
(\mathcal{L}^{\otimes m} \otimes \O_{X}/\I^{r} )|_{H} \arrow[r]& 0
\end{tikzcd}
\end{equation}
are exact for all $m \in \Z$ and $r \in \Z_{\geq 1}$.
Here the second map is the one induced by $H$.
\end{lemma}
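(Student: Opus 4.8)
The plan is to treat the two halves of the sequence separately: the right-hand portion is exact for \emph{every} $H \in \dd$ and requires no genericity, and only the injectivity of the first map needs a general-position hypothesis, which I will extract from \cref{lem:quotreg}.

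Fix a section $s \in H^{0}(X,\mathcal{L})$ whose divisor of zeros is $H$ (it is unique up to $k^{\times}$, which does not affect exactness). For any coherent sheaf $\mathcal{G}$ on $X$ one has the tautological sequence $\mathcal{G}\otimes\mathcal{L}^{-1}\xrightarrow{\,s\,}\mathcal{G}\to\mathcal{G}\otimes\O_{H}\to 0$, exact at its last two terms: the image of the first map is $\mathcal{I}_{H}\cdot\mathcal{G}$, where $\mathcal{I}_{H}=\mathrm{Im}(s\colon\mathcal{L}^{-1}\to\O_{X})$ is the ideal sheaf of $H$, so the cokernel is $\mathcal{G}/\mathcal{I}_{H}\mathcal{G}=\mathcal{G}\otimes\O_{H}$. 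Taking $\mathcal{G}=\mathcal{L}^{\otimes m}\otimes\O_{X}/\I^{r}$ gives exactness of the displayed sequence at its last two terms for all $m\in\Z$, all $r\geq 1$, and every $H$. Hence the only point to prove is that, for general $H$, the map $\mathcal{L}^{\otimes m-1}\otimes\O_{X}/\I^{r}\xrightarrow{\,s\,}\mathcal{L}^{\otimes m}\otimes\O_{X}/\I^{r}$ is injective; and since tensoring with the invertible sheaf $\mathcal{L}^{\otimes m-1}$ preserves exactness, this is equivalent to injectivity of $\O_{X}/\I^{r}\xrightarrow{\,s\,}\mathcal{L}\otimes\O_{X}/\I^{r}$, a condition independent of $m$.

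Next I would produce a single finite bad locus that works for all $r$ simultaneously. Fix a finite affine cover $X=\bigcup_{j=1}^{t}\Spec A_{j}$ on which $\mathcal{L}$ is trivial; via chosen trivializations, $s$ restricts on $\Spec A_{j}$ to an element $a_{j}\in A_{j}$, and $\O_{X}/\I^{r}$ to the sheaf attached to $A_{j}/I_{j}^{r}$, where $I_{j}\subset A_{j}$ is the ideal defining $\I$. By \cref{lem:quotreg} there are finitely many primes $\p_{j,1},\dots,\p_{j,n_{j}}\subset A_{j}$ such that any element of $A_{j}$ outside their union is $A_{j}/I_{j}^{r}$-regular for every $r\geq 1$. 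Let $Z\subset X$ be the finite set of points corresponding to the $\p_{j,i}$, over all $j$. If no point of $Z$ lies on $H$, then on each chart $a_{j}\notin\p_{j,i}$ for all $i$, so $a_{j}$ is $A_{j}/I_{j}^{r}$-regular for every $r$; as localization is exact, multiplication by $a_{j}$ is then injective on every stalk of $\O_{X}/\I^{r}$ over $\Spec A_{j}$, and gluing over $j$ shows $\O_{X}/\I^{r}\xrightarrow{\,s\,}\mathcal{L}\otimes\O_{X}/\I^{r}$ is injective for all $r\geq 1$. Twisting by $\mathcal{L}^{\otimes m-1}$ then yields injectivity of the first map in the statement for all $m\in\Z$ and $r\geq 1$ at once.

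Finally I would check that ``no point of $Z$ lies on $H$'' holds for general $H$. For each $z\in Z$, the locus of $H\in\dd$ passing through $z$ is cut out inside $\dd$ by the linear condition that the section vanish at $z$, hence is a closed subset, and it is proper because $z$ is not a base point of the base-point-free system $\dd$. Since $\dd$ is irreducible and $Z$ is finite, the union of these loci over $z\in Z$ is a proper closed subset of $\dd$, whose complement is the desired nonempty open set (nonempty because $k$ is infinite). I do not expect a genuine obstacle here: the substance is entirely in \cref{lem:quotreg}, which supplies the crucial uniformity in $r$, and the remaining points — that the right-hand exactness is unconditional, and that $Z$ may contain non-closed points without causing harm — are straightforward.
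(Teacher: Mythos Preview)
Your proof is correct and follows essentially the same route as the paper: cover $X$ by affines, apply \cref{lem:quotreg} on each chart to produce finitely many primes, and then choose $H$ so that its local equation avoids all of them. The paper phrases the avoidance condition as ``$Z_{ij}\not\subset H$'' for the irreducible closed sets $Z_{ij}$ with generic point $\p_{ij}$, which is equivalent to your ``no point of $Z$ lies on $H$''; your write-up is simply more explicit about the right-hand exactness and about why the good locus in $\dd$ is nonempty open.
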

\begin{proof}
Let $X = \bigcup_{i=1}^{t}\Spec A_{i}$ be an open affine cover.
Let $I_{i} = \I(\Spec A_{i})$.
Let $\p_{ij}$, $j=1,\dots, n_{i}$ be prime ideals of $A_{i}$ obtained by
applying \cref{lem:quotreg} to $I_{i} \subset A_{i}$.
Let $Z_{ij} \subset X$ be the irreducible closed subset with generic point $\p_{ij} \in \Spec A_{i}$.
Then for $H \in \dd$ such that $Z_{ij} \not\subset H$ for all $i,j$, the sequence
\begin{equation}
\begin{tikzcd}[column sep=small] 
0 \arrow[r] &  \O_{X}/\I^{r} \arrow[r,"H"] &  \mathcal{L} \otimes \O_{X}/\I^{r}  \arrow[r] &
(\mathcal{L}\otimes \O_{X}/\I^{r} )|_{H} \arrow[r]& 0
\end{tikzcd}
\end{equation}
is exact.
By tensoring $ \mathcal{L}^{\otimes m-1}$, we get the desired exactness.
\end{proof}

\begin{proof}[Proof of \cref{lem:glsecthick}]
We prove the statement by induction on $l$.
When $l=0$, 
\begin{align}
h^{0}(X, \L^{\otimes m} \otimes \O_{X}/\I^{r}) = h^{0}(X, \O_{X}/\I^{r}) 
\end{align}
and we are done.

Suppose $l \geq 1$.
Note that in this case we have $N = \dim X \geq 2$.
By \cref{lem:generalhpsecexact} and \cite[(3.4.10), Corollary 3.4.14]{joinintersec},
for a general $H_{1} \in \dd$, we have
\begin{align}
&\text{$H_{1}$ is irreducible reduced}\\
&\text{$Y \cap H_{1}$ is pure dimension $l-1$}
\end{align}
and
\begin{equation}
\begin{tikzcd}[column sep=small] 
0 \arrow[r] & \mathcal{L}^{\otimes m-1} \otimes \O_{X}/\I^{r} \arrow[r,"H_{1}"] &  \mathcal{L}^{\otimes m} \otimes \O_{X}/\I^{r}  \arrow[r] &
(\mathcal{L}^{\otimes m} \otimes \O_{X}/\I^{r} )|_{H_{1}} \arrow[r]& 0
\end{tikzcd}
\end{equation}
is exact for all $m \in \Z$ and $r \in \Z_{\geq 1}$.
Then we have
\begin{align}
h^{0}(X, \L^{ \otimes m}  \otimes \O_{X}/\I^{r}) \leq
h^{0}(X, \L^{ \otimes m-1}  \otimes \O_{X}/\I^{r}) + h^{0}(H_{1}, \L|_{H_{1}}^{ \otimes m} \otimes \O_{H_{1}} / \I_{1}^{r})
\end{align}
where $\I_{1}$ is the ideal sheaf of $Y \cap H_{1} \subset H_{1}$.
Using this repeatedly, we get
\begin{align}
h^{0}(X, \L^{ \otimes m}  \otimes \O_{X}/\I^{r}) \leq
h^{0}(X, \O_{X}/\I^{r}) + \sum_{j=1}^{m} h^{0}(H_{1}, \L|_{H_{1}}^{ \otimes j} \otimes \O_{H_{1}}/\I_{1}^{r})
\label{eq:h0Lmboundinduction}
\end{align}
for $m \geq 1$ and $r \geq 1$.
Now we apply our induction hypothesis to
$Y \cap H_{1} \subset H_{1}$, $\L|_{H_{1}}$, and the image of $\dd$ by
$H^{0}(X, \L) \longrightarrow H^{0}(H_{1}, \L|_{H_{1}})$.
Then for a general sequence $H_{2}, \dots , H_{l} \in \dd$,
we have
\begin{align}
&\text{$H_{1\dots i}=H_{1}\cap \cdots \cap H_{i}$ is irreducible and reduced of dimension $N-i$}\\
&\text{$Y \cap H_{1\dots i}$ is pure dimension $l-i$}
\end{align}
and
\begin{align}
h^{0}(H_{1}, \L|_{H_{1}}^{ \otimes j} \otimes \O_{H_{1}}/\I_{1}^{r})
\leq 
\sum_{i=0}^{l-1} \binom{j+i-1}{i} h^{0}(H_{1\cdots (i+1)}, \O_{H_{1\cdots (i+1)}}/\I_{i+1}^{r})
\end{align}
for $j \geq 1$ and $r \geq 1$,
where $\I_{i+1}$ is the ideal sheaf of $Y \cap H_{1\cdots (i+1)} \subset H_{1\cdots (i+1)}$.
Plugging into \cref{eq:h0Lmboundinduction}, we get
\begin{align}
&h^{0}(X, \L^{ \otimes m}  \otimes \O_{X}/\I^{r} ) \\
&\leq 
h^{0}(X, \O_{X}/\I^{r}) + 
\sum_{i=0}^{l-1} \sum_{j=1}^{m}\binom{j+i-1}{i} h^{0}(H_{1\cdots (i+1)}, \O_{H_{1\cdots (i+1)}}/\I_{i+1}^{r})\\
&=
h^{0}(X, \O_{X}/\I^{r}) + 
\sum_{i=0}^{l-1} \binom{m+i}{i+1} h^{0}(H_{1\cdots (i+1)}, \O_{H_{1\cdots (i+1)}}/\I_{i+1}^{r})\\
&=
h^{0}(X, \O_{X}/\I^{r}) + 
\sum_{i=1}^{l} \binom{m+i-1}{i} h^{0}(H_{1\cdots i}, \O_{H_{1\cdots i}}/\I_{i}^{r})
\end{align}
and we are done.
\end{proof}

\begin{lemma}\label{lem:relatetobup}
Let $X$ be a projective variety over a field $k$.
Let $Y \subset X$ be a closed subscheme with ideal sheaf $ \I \subset \O_{X}$.
Let $ \pi \colon \widetilde{X} \longrightarrow X$ be the blow up along $Y$, 
and set $E = \pi^{-1}(Y) = \Proj \oplus_{n\geq 0} \I^{n}/\I^{n+1}$ be the exceptional divisor.
Then there is $n_{0} \geq 0$ such that for all $n \geq n_{0}$, we have
\begin{align}
\I^{n}/\I^{n+1} \simeq (\pi|_{E})_{*}\O_{E}(n)
\end{align}
where $\pi|_{E}= E \longrightarrow Y$ is the restriction of $\pi$.
In particular, we have $h^{0}(Y, \I^{n}/\I^{n+1} ) = h^{0}(E,\O_{E}(n))$ and hence 
\begin{align}
h^{0}(X, \O_{X}/\I^{n})  \leq h^{0}(X, \O_{X}/\I^{n_{0}}) + \sum_{i=n_{0}}^{n-1} h^{0}(E, \O_{E}(i))
\end{align}
for all $n \geq n_{0}$.
\end{lemma}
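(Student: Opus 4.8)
The plan is to obtain the isomorphism $\I^{n}/\I^{n+1}\simeq(\pi|_{E})_{*}\O_{E}(n)$ for large $n$ from the comparison theorem for relative $\Proj$, and then to deduce the numerical bound by telescoping along the $\I$-adic filtration of $\O_{X}$.

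I would first fix the algebraic setup. Write $Y=V(\I)$ and let $G:=\bigoplus_{n\ge 0}\I^{n}/\I^{n+1}$ be the associated graded sheaf; this is a sheaf of graded $\O_{Y}$-algebras with $G_{0}=\O_{Y}$. Since the Rees algebra $\bigoplus_{n\ge 0}\I^{n}$ is a finitely generated $\O_{X}$-algebra generated in degree $1$ (its degree-$n$ part is $\I^{n}=\I\cdot\I^{n-1}$), the quotient $G$ is a finitely generated $\O_{Y}$-algebra generated in degree $1$; in particular the multiplication $\Sym^{n}(\I/\I^{2})\to\I^{n}/\I^{n+1}$ is surjective. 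By the description in the statement, $E=\Proj G$, the sheaf $\O_{E}(1)$ is the tautological invertible sheaf on $\Proj G$, and $\pi|_{E}\colon E\to Y$ is the structure morphism.

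The main step is the comparison statement: the canonical graded $\O_{Y}$-algebra homomorphism $G\to\bigoplus_{n\ge 0}(\pi|_{E})_{*}\O_{E}(n)$ is, in each degree, the natural map $G_{n}=\I^{n}/\I^{n+1}\to(\pi|_{E})_{*}\O_{E}(n)$, and this map is an isomorphism for all $n$ large enough. This is the standard comparison theorem for a finitely generated graded algebra generated in degree $1$ over a Noetherian base: affine-locally on $Y=\Spec A$, a choice of generators of $G_{1}$ gives a closed immersion $\Proj G\hookrightarrow\P^{r}_{A}$ under which $\O(1)$ restricts to $\O_{E}(1)$, and one then compares $G_{n}$ with $H^{0}(\P^{r}_{A},\O(n))\twoheadrightarrow H^{0}(\Proj G,\O(n))$ via relative Serre vanishing $H^{1}(\P^{r}_{A},\J(n))=0$ for $n\gg0$ (with $\J$ the ideal sheaf of $\Proj G$) together with the fact that a homogeneous ideal agrees with its saturation in large degree; since $Y$ is a closed subscheme of the projective, hence Noetherian, scheme $X$, it is quasi-compact and one threshold $n_{0}$ works over all of $Y$. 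I expect this to be the only real obstacle, and since it is well known I would invoke it from a standard reference on $\Proj$ rather than reprove it, indicating the argument only as above.

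Granting this, for $n\ge n_{0}$ one has $h^{0}(Y,\I^{n}/\I^{n+1})=h^{0}(Y,(\pi|_{E})_{*}\O_{E}(n))=h^{0}(E,\O_{E}(n))$, and also $H^{0}(X,\I^{n}/\I^{n+1})=H^{0}(Y,\I^{n}/\I^{n+1})$ because $\I^{n}/\I^{n+1}$ is annihilated by $\I$ and so is the pushforward of a coherent $\O_{Y}$-module. Finally, for each $n\ge 1$ the short exact sequence
\begin{align}
0\longrightarrow\I^{n}/\I^{n+1}\longrightarrow\O_{X}/\I^{n+1}\longrightarrow\O_{X}/\I^{n}\longrightarrow 0
\end{align}
gives, by left exactness of global sections, $h^{0}(X,\O_{X}/\I^{n+1})\le h^{0}(X,\O_{X}/\I^{n})+h^{0}(X,\I^{n}/\I^{n+1})$, where for $n\ge n_{0}$ the last term equals $h^{0}(E,\O_{E}(n))$. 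Summing these inequalities for $n=n_{0},n_{0}+1,\dots,m-1$ yields
\begin{align}
h^{0}(X,\O_{X}/\I^{m})\le h^{0}(X,\O_{X}/\I^{n_{0}})+\sum_{i=n_{0}}^{m-1}h^{0}(E,\O_{E}(i))
\end{align}
for all $m\ge n_{0}$, which is the assertion.
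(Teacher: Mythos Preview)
Your proof is correct and follows essentially the same approach as the paper: the paper also obtains the isomorphism $\I^{n}/\I^{n+1}\simeq(\pi|_{E})_{*}\O_{E}(n)$ for large $n$ by citing the standard comparison result for $\Proj$ (specifically \cite[II Exercise 5.9]{HartshorneAG}), and then derives the numerical bound from the same short exact sequence and telescoping sum. The only difference is that you sketch the argument behind the comparison theorem, whereas the paper simply invokes the reference.
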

\begin{proof}
There is a canonical morphism 
$\I^{n}/\I^{n+1} \longrightarrow (\pi|_{E})_{*}\O_{E}(n)$
by the definition of relative $\Proj$ and \cite[II Exercise 5.9 (a)]{HartshorneAG}.
It is isomorphism for all large $n$, say at least $n_{0}$, by \cite[II Exercise 5.9 (b)]{HartshorneAG}.
For the second statement, consider the exact sequence 
\begin{equation}
\begin{tikzcd}
0 \arrow[r] & \I^{n}/\I^{n+1} \arrow[r] & \O_{X}/\I^{n+1} \arrow[r] & \O_{X}/\I^{n} \arrow[r] & 0.
\end{tikzcd}
\end{equation}
By taking global section, we get
\begin{align}
h^{0}(X, \O_{X}/\I^{n+1}  ) - h^{0}(X, \O_{X}/\I^{n}  ) &\leq h^{0}(X, \I^{n}/\I^{n+1} )\\
& = h^{0}(Y, \I^{n}/\I^{n+1}) = h^{0}(E, \O_{E}(n))  
\end{align}
for $n \geq n_{0}$.
Taking sum of these inequalities, we get
\begin{align}
h^{0}(X, \O_{X}/\I^{n}) - h^{0}(X, \O_{X}/\I^{n_{0}}) \leq \sum_{i=n_{0}}^{n-1} h^{0}(E, \O_{E}(i)).
\end{align}
\end{proof}

\begin{definition}
Let $X$ be a projective scheme over a field $k$.
Let $Y \subset X$ be a closed subscheme with ideal sheaf $ \I \subset \O_{X}$.
Let $\I \subset \O_{X}$ be the ideal sheaf of $Y \subset X$.
We set
\begin{align}
\tau(Y, X) := \limsup_{r \to \infty} \frac{h^{0}(X, \O_{X} / \I^{r})}{r^{\dim X}}.
\end{align}
This is finite by \cref{lem:relatetobup}, and \cite[Example 1.2.33]{Lazpos1} or \cref{lem:arr} below.
\end{definition}

\begin{proposition}\label{prop:existencevansectau}
Let $k$ be an algebraically closed field of characteristic zero.
Let $X$ be a projective variety over $k$ of dimension $N \geq 0$.
Let $Y \subset X$ be a proper closed subscheme of pure dimension $l$ with ideal sheaf $ \I \subset \O_{X}$.
Let $ \mathcal{L}$ be a very ample invertible $\O_{X}$-module on $X$.
Consider a general sequence $H_{1}, \dots, H_{l} \in | \mathcal{L}|$ such that the conclusions of \cref{lem:glsecthick} hold.
Set 
\begin{align}
\tau_{i} = \tau(Y\cap  H_{1\cdots i} ,  H_{1\cdots i})
\end{align}
for $i=0,\dots, l$.
Let $\e_{0} > 0$.
Then there is $a \in \Z_{\geq 1}$ such that for all $m,r \geq a$ with
\begin{align}
\sum_{i=0}^{l} \frac{\tau_{i}}{i !} \bigg( \frac{r}{m} \bigg)^{N-i} \leq \frac{ (c_{1}( \mathcal{L})^{N}) }{N!} - \e_{0}, \label{eq:condonrm}
\end{align}
we have
\begin{align}
H^{0}(X, \mathcal{L}^{\otimes m} \otimes \I^{r}) \neq 0.
\end{align}
\end{proposition}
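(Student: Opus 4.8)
The strategy is to estimate $h^0(X, \mathcal{L}^{\otimes m} \otimes \I^r)$ from below via the exact sequence
\begin{align}
0 \longrightarrow \mathcal{L}^{\otimes m} \otimes \I^r \longrightarrow \mathcal{L}^{\otimes m} \longrightarrow \mathcal{L}^{\otimes m} \otimes \O_X/\I^r \longrightarrow 0,
\end{align}
so that
\begin{align}
h^0(X, \mathcal{L}^{\otimes m} \otimes \I^r) \geq h^0(X, \mathcal{L}^{\otimes m}) - h^0(X, \mathcal{L}^{\otimes m} \otimes \O_X/\I^r).
\end{align}
For the first term, since $\mathcal{L}$ is very ample (hence big and nef) on the $N$-dimensional variety $X$, asymptotic Riemann–Roch gives $h^0(X, \mathcal{L}^{\otimes m}) = \frac{(c_1(\mathcal{L})^N)}{N!} m^N + O(m^{N-1})$. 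For the second term I would feed in \cref{lem:glsecthick}: for the chosen general sequence $H_1, \dots, H_l$,
\begin{align}
h^0(X, \mathcal{L}^{\otimes m} \otimes \O_X/\I^r) \leq \sum_{i=0}^{l} \binom{m+i-1}{i} h^0(H_{1\cdots i}, \O_{H_{1\cdots i}}/\I_i^r).
\end{align}

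Next I would control each $h^0(H_{1\cdots i}, \O_{H_{1\cdots i}}/\I_i^r)$ as $r \to \infty$. By definition of $\tau_i = \tau(Y \cap H_{1\cdots i}, H_{1\cdots i})$ and since $\dim H_{1\cdots i} = N-i$, we have $h^0(H_{1\cdots i}, \O_{H_{1\cdots i}}/\I_i^r) \leq (\tau_i + o(1)) r^{N-i}$ as $r \to \infty$; more precisely, for any $\e > 0$ there is $r_0$ with $h^0(H_{1\cdots i}, \O_{H_{1\cdots i}}/\I_i^r) \leq (\tau_i + \e) r^{N-i}$ for all $r \geq r_0$, uniformly over the finitely many indices $i = 0, \dots, l$. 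Meanwhile $\binom{m+i-1}{i} = \frac{m^i}{i!} + O(m^{i-1})$ as $m \to \infty$. Combining, for $m, r$ both large,
\begin{align}
h^0(X, \mathcal{L}^{\otimes m} \otimes \O_X/\I^r) \leq \sum_{i=0}^{l} \Big( \frac{1}{i!} + o(1) \Big) m^i (\tau_i + \e) r^{N-i} = \sum_{i=0}^{l} \frac{\tau_i}{i!} m^i r^{N-i} + (\text{error}),
\end{align}
where the error is $o(m^N)$ once we also use that, on the region where \cref{eq:condonrm} holds, $r/m$ is bounded (it is at most a constant depending only on the $\tau_i$ and $(c_1(\mathcal{L})^N)$), so $r \leq Cm$ and every error term of the shape $m^{i-1} r^{N-i}$ or $m^i r^{N-i-1}$ is indeed $o(m^N)$.

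Putting the two estimates together, for $m, r$ large with $r/m$ in the relevant bounded range,
\begin{align}
h^0(X, \mathcal{L}^{\otimes m} \otimes \I^r) \geq \frac{(c_1(\mathcal{L})^N)}{N!} m^N - \sum_{i=0}^{l} \frac{\tau_i}{i!} m^i r^{N-i} - o(m^N) = m^N \Big( \frac{(c_1(\mathcal{L})^N)}{N!} - \sum_{i=0}^{l} \frac{\tau_i}{i!} (r/m)^{N-i} \Big) - o(m^N).
\end{align}
If $m, r \geq a$ satisfy \cref{eq:condonrm}, the bracket is $\geq \e_0 > 0$, so the right side is $\geq \e_0 m^N - o(m^N) > 0$ once $a$ is taken large enough; hence $H^0(X, \mathcal{L}^{\otimes m} \otimes \I^r) \neq 0$. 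The one point needing care is making the $o(m^N)$ genuinely uniform: the $o(1)$ coming from asymptotic Riemann–Roch and from $\binom{m+i-1}{i}/\frac{m^i}{i!} \to 1$ depend only on $X, \mathcal{L}, N$ and on $i \leq l$, not on $r$, and the $\tau_i$-estimate is uniform in the $r \geq r_0$ regime; combined with the a priori bound $r \leq Cm$ forced by \cref{eq:condonrm}, all error terms are bounded by a fixed $o(m^N)$ function, which is what lets a single threshold $a$ work. That uniformity bookkeeping is the main (though routine) obstacle; the structural content is entirely in \cref{lem:glsecthick} and asymptotic Riemann–Roch.
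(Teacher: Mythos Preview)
Your proposal is correct and follows essentially the same route as the paper: the exact sequence lower bound, asymptotic Riemann--Roch for $h^0(X,\mathcal{L}^{\otimes m})$, \cref{lem:glsecthick} together with the definition of $\tau_i$ for the upper bound, and the observation that \cref{eq:condonrm} forces $r/m$ to be bounded (the paper makes this explicit via $\tau_l>0$). One notational caveat: the $\e$-contribution $\sum_i \frac{\e}{i!}m^i r^{N-i}$ is $O(\e\, m^N)$ rather than $o(m^N)$, so the final step is really ``choose $\e$ small relative to $\e_0$, then $a$ large''---exactly what the paper does and what your closing paragraph describes, though your $o(m^N)$ shorthand slightly obscures it.
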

\begin{proof}
First of all, the statement is trivial if $Y =  \emptyset$.
Thus we may assume $Y \neq  \emptyset$.
Also, we may assume the right-hand-side of \cref{eq:condonrm} is positive.

By the exact sequence
\begin{equation}
\begin{tikzcd}
0  \arrow[r] & \L^{ \otimes m} \otimes \I^{r}  \arrow[r] & \L^{m} \arrow[r] & \L^{ \otimes m} \otimes \O_{X}/\I^{r} \arrow[r] & 0,
\end{tikzcd}
\end{equation}
we have
\begin{align}
h^{0}(X,  \L^{ \otimes m} \otimes \I^{r} ) \geq h^{0}(X, \L^{m}) - h^{0}(X, \L^{ \otimes m} \otimes \O_{X}/\I^{r}) \label{eq:h0lowerbound}
\end{align}
for all $m \in \Z$ and $r \in \Z_{\geq 0}$.
Let us fix small $\e > 0$. We will specify how small it should be later.
Since
\begin{align}
\lim_{m \to \infty} \frac{h^{0}(X, \L^{ \otimes m})}{m^{N}/N!} = \vol( \L) = (c_{1}(\L)^{N})
\end{align}
(cf. \cite[Example 11.4.7]{Lazpos2}), 
there is $m(\e) \geq 1$ such that
\begin{align}
h^{0}(X, \L^{ \otimes m}) > \bigg( \frac{(c_{1}(\L)^{N})}{N!} - \e \bigg) m^{N} \label{eq:h0Lmlowerbound}
\end{align}
for all $m \geq m(\e)$.

On the other hand, since
\begin{align}
\tau_{i} = \limsup_{r \to \infty} \frac{h^{0}(H_{1\cdots i}, \O_{H_{1\cdots i}}/\I_{i}^{r})}{r^{N-i}},
\end{align}
there is $r(\e) \geq 1$ such that
\begin{align}
h^{0}(H_{1\cdots i}, \O_{H_{1\cdots i}}/\I_{i}^{r}) \leq (\tau_{i} + \e)r^{N-i}
\end{align}
for all $r \geq r(\e)$ and $i=0,\dots, l$.
Thus by \cref{lem:glsecthick}, we have
\begin{align}
h^{0}(X, \L^{ \otimes m} \otimes \O_{X}/\I^{r})
&\leq
\sum_{i=0}^{l} \binom{m+i-1}{i} h^{0}(H_{1\cdots i}, \O_{H_{1\cdots i}}/\I_{i}^{r})\\
&\leq
\sum_{i=0}^{l} \binom{m+i-1}{i}  (\tau_{i} + \e)r^{N-i} \label{eq:h0LmOIrupperbound}
\end{align}
for all $m \geq 1$ and $r \geq r(\e)$.

By \cref{eq:h0lowerbound}, \cref{eq:h0Lmlowerbound}, and \cref{eq:h0LmOIrupperbound}, 
$H^{0}(X, \L^{ \otimes m} \otimes \I^{r}) \neq 0$ if
$m \geq m(\e)$, $r\geq r(\e)$, and
\begin{align}
\sum_{i=0}^{l} \binom{m+i-1}{i}  (\tau_{i} + \e)r^{N-i}  \leq \bigg( \frac{(c_{1}(\L)^{N})}{N!} - \e \bigg) m^{N}. \label{eq:rmineq1}
\end{align}
By enlarging $m(\e)$ if necessary, we may assume 
\begin{align}
\frac{1}{m^{i}}\binom{m+i-1}{i}  \leq \frac{1}{i!} + \e
\end{align}
for all $m \geq m(\e)$ and $i=0,\dots, l$.
Then \cref{eq:rmineq1} follows from
\begin{align}
\sum_{i=0}^{l} \bigg( \frac{1}{i!} + \e \bigg) (\tau_{i} + \e)\bigg( \frac{r}{m}\bigg)^{N-i}  \leq \frac{(c_{1}(\L)^{N})}{N!} - \e .
\end{align}

Now, note that $\tau_{l} > 0$ since $Y \cap H_{1\cdots l}$ is non-empty and $0$-dimensional.
Thus the condition \cref{eq:condonrm} implies 
\begin{align}
\frac{r}{m} \leq \bigg\{ \frac{l!}{\tau_{l}} \bigg( \frac{(c_{1}(\L)^{N})}{N!} - \e_{0}\bigg) \bigg\}^{1/(N-l)} =: M.
\end{align}
Hence if $m,r \geq 1$ satisfy condition \cref{eq:condonrm}, then
\begin{align}
\sum_{i=0}^{l} \bigg( \frac{1}{i!} + \e \bigg) (\tau_{i} + \e)\bigg( \frac{r}{m}\bigg)^{N-i} 
\leq
\frac{ (c_{1}( \mathcal{L})^{N}) }{N!} - \e_{0} + 
\e \sum_{i=0}^{l} \bigg( \frac{1}{i!} + \tau_{i} + \e\bigg)M^{N-i}.
\end{align}
Therefore, if we took $\e$ so that
\begin{align}
\frac{ (c_{1}( \mathcal{L})^{N}) }{N!} - \e_{0} + 
\e \sum_{i=0}^{l} \bigg( \frac{1}{i!} + \tau_{i} + \e\bigg)M^{N-i}
\leq
\frac{(c_{1}(\L)^{N})}{N!} - \e,
\end{align}
then $a = \max\{m(\e), r(\e)\}$ has the desired property.
\end{proof}

Next, we are going to find an upper bound of ``$\tau_{i}$" in dynamical situation, i.e.\ 
upper bound of ``$\tau(f^{-n}(Y)\cap H_{1\cdots i}, H_{1 \cdots i})$".
It will be done in \cref{prop:boundoftaudyn}. We prepare two lemmas.

\begin{lemma}\label{lem:arr}
Let $N \in \Z_{\geq 0}$.
Then there is $C > 0$ with the following property.
Let 
\begin{itemize}
\item $k$ be an infinite field;
\item $X$ be a projective scheme over $k$ of pure dimension $N$;
\item $ \mathcal{L}$ be an invertible $\O_{X}$-module on $X$;
\item $ \mathcal{L}_{1}, \mathcal{L}_{2}$ be base point free  invertible $\O_{X}$-module such that 
$ \mathcal{L} \simeq \mathcal{L}_{1} \otimes \mathcal{L}_{2}^{-1} $;
\item $ \mathcal{F}$ be a coherent $\O_{X}$-module.
\end{itemize}
We set 
\begin{align}
\d_{ \mathcal{L}_{1}, \mathcal{L}_{2}} = \max\{ (c_{1}( \mathcal{L}_{1})^{i} \cap c_{1}( \mathcal{L}_{2})^{\dim X - i} \cap [X]) \mid 0 \leq i \leq \dim X  \}.
\end{align}
Then we have
\begin{align}
h^{0}(X, \mathcal{F} \otimes \mathcal{L}^{\otimes n}) \leq C \rank(\mathcal{F})\d_{ \mathcal{L}_{1} , \mathcal{L}_{2}}  n^{\dim X} + O(n^{\dim X - 1})
\end{align}
for $n \geq 0$,
where the implicit constant depends at most on $X, \mathcal{L}, \mathcal{L}_{1}, \mathcal{L}_{2}$, and $ \mathcal{F}$.
Here 
\begin{align}
\rank( \mathcal{F}) := \max \{ \dim_{\k(\eta)} \mathcal{F} \otimes \k(\eta) \mid \text{$\eta \in X$ generic point } \}
\end{align}
where $\k(\eta)$ is the residue field at $\eta$.
\end{lemma}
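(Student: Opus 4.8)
The plan is to reduce to the case of a very ample $\mathcal{L}_1$ and an arbitrary base point free $\mathcal{L}_2$, and then to induct on $\dim X$ by cutting with general members of the linear systems $|\mathcal{L}_1|$ and $|\mathcal{L}_2|$. First I would observe that it suffices to prove the bound for $\mathcal{F}$ a pure sheaf supported on all of $X$ (passing to $\mathcal{F}/(\text{torsion supported in lower dimension})$ only changes $h^0$ by $O(n^{\dim X-1})$, since a sheaf supported in dimension $\le \dim X -1$ has $h^0(\mathcal{F}'\otimes\mathcal{L}^{\otimes n}) = O(n^{\dim X-1})$ by Noetherian induction on support). I would also reduce to $X$ integral: filter $\mathcal{F}$ along the irreducible components of $X_{\mathrm{red}}$, noting that $\rank(\mathcal{F})$ is the max of the generic ranks over components, and that the intersection numbers $(c_1(\mathcal{L}_1)^i\cap c_1(\mathcal{L}_2)^{\dim X-i}\cap[X])$ are sums of the corresponding numbers over the top-dimensional components; so the per-component bounds add up to what is wanted with the same universal constant $C$.

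For $X$ integral of dimension $N$, the key step is: replace $\mathcal{L}_2$ by an auxiliary very ample $\mathcal{A}$, writing $\mathcal{L} \simeq \mathcal{L}_1 \otimes \mathcal{L}_2^{-1} = (\mathcal{L}_1\otimes\mathcal{A})\otimes(\mathcal{L}_2\otimes\mathcal{A})^{-1}$, so we may as well assume both $\mathcal{L}_1$ and $\mathcal{L}_2$ are very ample (the intersection numbers $\d_{\mathcal{L}_1,\mathcal{L}_2}$ only grow under such a replacement, and the universal constant is unaffected — one just tracks the binomial blow-up as in Lemma~\ref{lem:glsecthick}). Choose a general $H_2 \in |\mathcal{L}_2|$: by \cite[(3.4.10), Corollary 3.4.14]{joinintersec} it is integral of dimension $N-1$ and avoids the associated primes of $\mathcal{F}$, so tensoring $0\to \mathcal{L}_2^{-1}\to\O_X\to\O_{H_2}\to 0$ with $\mathcal{F}\otimes\mathcal{L}^{\otimes n}\otimes\mathcal{L}_1^{\otimes ?}$ — more precisely, from $\mathcal{F}\otimes\mathcal{L}^{\otimes n} \hookrightarrow \mathcal{F}\otimes\mathcal{L}^{\otimes n}\otimes\mathcal{L}_2 = \mathcal{F}\otimes\mathcal{L}_1\otimes\mathcal{L}^{\otimes n-1}$ with cokernel a quotient of $(\mathcal{F}\otimes\mathcal{L}_1\otimes\mathcal{L}^{\otimes n-1})|_{H_2}$ — one gets the telescoping estimate
\begin{align}
h^0(X,\mathcal{F}\otimes\mathcal{L}^{\otimes n}) \le h^0(X,\mathcal{F}\otimes\mathcal{L}_1\otimes\mathcal{L}^{\otimes -1}\cdot\text{(bounded start)}) + \sum_{j} h^0\bigl(H_2, (\mathcal{F}\otimes\mathcal{L}_1\otimes\mathcal{L}^{\otimes j})|_{H_2}\bigr),
\end{align}
analogous to \cref{eq:h0Lmboundinduction}; here the first term is $O(1)$ in $n$ after a finite initial segment because multiplying by a general $\mathcal{L}_2$-section is eventually injective on global sections away from a bounded range. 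Then apply the inductive hypothesis on $H_2$ to the restricted sheaf, whose generic rank on each component of $H_2$ is $\le \rank(\mathcal{F})$, and whose twisting line bundle is still of the form (base point free) $\otimes$ (base point free)$^{-1}$; summing the resulting $O(j^{N-1})$ bound over $j \le n$ produces the $O(n^N)$ term, and the constant in front is governed by $(c_1(\mathcal{L}_1)\cdot c_1(\mathcal{L}_1|_{H_2})^i\cdot c_1(\mathcal{L}_2|_{H_2})^{N-1-i}\cap[H_2]) = (c_1(\mathcal{L}_1)^{i+1}\cdot c_1(\mathcal{L}_2)^{N-1-i}\cap[X])$, which is among the quantities defining $\d_{\mathcal{L}_1,\mathcal{L}_2}$. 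One also needs the companion reduction in which one cuts by a general $H_1\in|\mathcal{L}_1|$ instead, to pick up the pure powers $(c_1(\mathcal{L}_2)^{N}\cap[X])$ — i.e. run the same argument after swapping the roles so that the exact sequence is $0\to\mathcal{L}_1^{-1}\to\O_X\to\O_{H_1}\to 0$; the maximum over the two procedures, together with the base case $N=0$ where $h^0(X,\mathcal{F}\otimes\mathcal{L}^{\otimes n}) = \rank(\mathcal{F})\cdot\mathrm{length}$ is constant, closes the induction with a universal $C$ (e.g. $C = 2^N$ or similar, absorbing the binomial factors).

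The main obstacle I anticipate is bookkeeping the \emph{universality} of the constant $C$: it must depend only on $N$, not on $X$, $\mathcal{L}$, $\mathcal{L}_i$, or $\mathcal{F}$, whereas the $O(n^{\dim X-1})$ error term is allowed to depend on everything. This forces care in the induction — each cutting step introduces binomial coefficients $\binom{n+i}{i+1}$ (as in the proof of \cref{lem:glsecthick}) whose leading coefficients are $1/(i+1)!$, so the accumulated constant is a sum of reciprocals of factorials times at most $\d_{\mathcal{L}_1,\mathcal{L}_2}$, hence bounded by $e\cdot\d_{\mathcal{L}_1,\mathcal{L}_2}$, and one must check that the "error" absorbs all the $X$-dependent initial segments and lower-order restriction errors without polluting the leading constant. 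A secondary subtlety is that after restricting to $H_2$ the sheaf $\mathcal{F}|_{H_2}$ may acquire lower-dimensional torsion or a component structure; this is handled by the reductions in the first paragraph applied inductively, so the induction hypothesis must be stated for arbitrary coherent $\mathcal{F}$ on arbitrary pure-dimensional projective $X$ (as it is), not just for integral $X$ and torsion-free $\mathcal{F}$.
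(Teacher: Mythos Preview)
Your overall strategy---induction on $N$ via restriction to divisors in $|\mathcal{L}_1|$ and $|\mathcal{L}_2|$---is the same as the paper's, but the telescoping step as you have written it does not close up. From your exact sequence
\[
0 \longrightarrow \mathcal{F}\otimes\mathcal{L}^{\otimes n} \xrightarrow{\ s_2\ } \mathcal{F}\otimes\mathcal{L}_1\otimes\mathcal{L}^{\otimes n-1} \longrightarrow (\mathcal{F}\otimes\mathcal{L}_1\otimes\mathcal{L}^{\otimes n-1})|_{H_2} \longrightarrow 0
\]
you get only $h^0(\mathcal{F}\otimes\mathcal{L}^{\otimes n}) \le h^0(\mathcal{F}\otimes\mathcal{L}_1\otimes\mathcal{L}^{\otimes n-1})$, which picks up an extra $\mathcal{L}_1$-twist at each step rather than recursing in $n$ with the same sheaf; the restriction term appears on the wrong side of the inequality to produce the sum you wrote. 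Running the $H_1$ and $H_2$ procedures separately and taking a maximum does not fix this, because neither procedure alone yields a recursion of the form $h^0(n) \le h^0(n-1) + (\text{restriction term})$.

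The paper's device is to use both cuts \emph{simultaneously} through a shared subsheaf. One chooses sections $s_i \in H^0(X,\mathcal{L}_i)$ avoiding the associated points of $X$ and of $\mathcal{F}$ (and the generic points of the locus where $\mathcal{F}$ fails to be generated by $\rank(\mathcal{F})$ sections---this is what controls $\rank(\mathcal{F}|_A)$ later). Writing $A = (s_1)$, $B = (s_2)$ and tensoring the two sequences $0\to\mathcal{F}\otimes\mathcal{L}_i^{-1}\to\mathcal{F}\to\mathcal{F}|_{(s_i)}\to 0$ by $\mathcal{L}^{\otimes n+1}$ and $\mathcal{L}^{\otimes n}$ respectively gives two short exact sequences with the \emph{same} left term $\mathcal{F}\otimes\mathcal{L}^{\otimes n}\otimes\mathcal{L}_2^{-1}$. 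Combining them yields
\[
\bigl|h^0(\mathcal{F}\otimes\mathcal{L}^{\otimes n+1}) - h^0(\mathcal{F}\otimes\mathcal{L}^{\otimes n})\bigr| \le h^0(A,\mathcal{F}|_A\otimes\mathcal{L}|_A^{\otimes n+1}) + h^0(B,\mathcal{F}|_B\otimes\mathcal{L}|_B^{\otimes n}),
\]
which telescopes cleanly. One then checks $\d_{\mathcal{L}_1|_A,\mathcal{L}_2|_A} \le \d_{\mathcal{L}_1,\mathcal{L}_2}$ via $[A] = c_1(\mathcal{L}_1)\cap[X]$ (here pure-dimensionality of $X$ is used), and similarly for $B$. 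This also makes all your preliminary reductions---to integral $X$, to torsion-free $\mathcal{F}$, to very ample $\mathcal{L}_i$---unnecessary: the paper works directly with arbitrary pure-dimensional $X$ and coherent $\mathcal{F}$, and base-point-free is enough throughout.
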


\begin{remark}
The cycle $[X]$ is the fundamental cycle, i.e.
\begin{align}
[X] = \sum_{\text{$\eta \in X$ generic point}} l_{\O_{X,\eta}}(\O_{X,\eta}) [ \overline{\{\eta\}} ]
\end{align}
where $l_{\O_{X,\eta}}(\O_{X,\eta})$ is the length of the local ring $\O_{X,\eta}$.
For a cycle class $\a$ on $X$, $(\a)$ is the push-forward of $\a$ to $\Spec k$, where
the group of zero-cycle classes $A_{0}(\Spec k)$ is identified with $\Z$ so that $[\Spec k]$ corresponds to $1$.
\end{remark}

\begin{proof}
We prove by induction on $N$.
When $N = 0$, there is a surjection $\O_{X}^{ \oplus \rank \mathcal{F}} \longrightarrow \mathcal{F}$.
Also, we have $\L \simeq \O_{X}$, and hence
\begin{align}
h^{0}(X, \mathcal{F} \otimes \L^{ \otimes n}) &= h^{0}(X, \mathcal{F}) \leq \rank (\mathcal{F}) h^{0}(X, \O_{X}) \\
&= \rank (\mathcal{F}) ([X]) = \rank (\mathcal{F}) \d_{\L_{1}, \L_{2}}.
\end{align}

Suppose $N \geq 1$.
Let $U \subset X$ be an open subset on which we have a surjection
$\O_{U}^{\oplus \rank \mathcal{F}} \longrightarrow \mathcal{F}|_{U}$.
Since $\L_{1}, \L_{2}$ are base point free, there are global sections 
$s_{i} \in H^{0}(X, \L_{i}) \ (i=1,2)$ such that
they do not vanish at associated points of $X$ and $ \mathcal{F}$,
and any generic points of $X \setminus U$.
Let $A,B \subset X$ be the effective Cartier divisors defined by $s_{1},s_{2}$
respectively.
Then 
\begin{align}
&\rank \mathcal{F}|_{A}, \rank \mathcal{F}|_{B} \leq \rank \mathcal{F}
\end{align}
and
\begin{equation}
\begin{tikzcd}[row sep=small] 
0 \arrow[r] & \mathcal{F} \otimes \L_{1}^{-1} \arrow[r,"s_{1}"] & \mathcal{F} \arrow[r] & \mathcal{F}|_{A} \arrow[r] & 0\\
0 \arrow[r] & \mathcal{F} \otimes \L_{2}^{-1} \arrow[r,"s_{2}"] & \mathcal{F} \arrow[r] & \mathcal{F}|_{B} \arrow[r] & 0
\end{tikzcd}
\end{equation}
are exact.
The tensoring $\L^{ \otimes n+1}$ and $\L^{ \otimes n}$ respectively, we get the following exact sequences:
\begin{equation}
\begin{tikzcd}[row sep=small, column sep = small] 
0 \arrow[r] & \mathcal{F} \otimes  \L^{ \otimes n} \otimes \L_{2}^{-1} \arrow[r,"s_{1}"] & \mathcal{F}  \otimes \L^{ \otimes n+1} \arrow[r] & \mathcal{F}|_{A}\otimes \L|_{A}^{ \otimes n+1} \arrow[r] & 0,\\
0 \arrow[r] & \mathcal{F} \otimes  \L^{ \otimes n} \otimes \L_{2}^{-1} \arrow[r,"s_{2}"] & \mathcal{F}  \otimes \L^{ \otimes n} \arrow[r] & \mathcal{F}|_{B}\otimes \L|_{B}^{ \otimes n} \arrow[r] & 0.
\end{tikzcd}
\end{equation}
Thus we get
\begin{align}
&|h^{0}(X, \mathcal{F} \otimes \L^{ \otimes n+1}) - h^{0}(X, \mathcal{F} \otimes \L^{ \otimes n})  |\\
&\leq
h^{0}(A, \mathcal{F}|_{A}\otimes \L|_{A}^{ \otimes n+1}) + h^{0}(B,  \mathcal{F}|_{B}\otimes \L|_{B}^{ \otimes n} ).
\end{align}
Note that if $A$ or $B$ is empty, then $h^{0} = 0$ and we can simply ignore them.
Thus we may assume $A, B  \neq \emptyset$.
Since $\dim A = \dim B = \dim X - 1$, we can apply the induction hypothesis to
$A, \mathcal{F}|_{A}, \L|_{A}, \L_{1}|_{A}, \L_{2}|_{A}$ and $B, \mathcal{F}|_{B}, \L|_{B}, \L_{1}|_{B}, \L_{2}|_{B}$.
Then there is a constant $C$ depending only on $\dim X -1$ such that
\begin{align}
&|h^{0}(X, \mathcal{F} \otimes \L^{ \otimes n+1}) - h^{0}(X, \mathcal{F} \otimes \L^{ \otimes n})  |\\
&\leq
C \rank (\mathcal{F}|_{A})  \d_{\L_{1}|_{A}, \L_{2}|_{A}} (n+1)^{\dim X - 1}\\
&+C\rank (\mathcal{F}|_{B})  \d_{\L_{1}|_{B}, \L_{2}|_{B}} n^{\dim X - 1}
+O(n^{\dim X - 2})
\end{align}
where the implicit constant depends at most on $A, B, \mathcal{F}, \L, \L_{1}, \L_{2}$.
Note that
\begin{align}
&(c_{1}(\L_{1}|_{A})^{i} \cap c_{1}(\L_{2}|_{A})^{\dim X -1  - i} \cap [A]) \\
&=
(c_{1}(\L_{1})^{i} \cap c_{1}(\L_{2})^{\dim X -1  - i} \cap c_{1}(\L_{1}) \cap [X]) 
\leq \d_{\L_{1}, \L_{2}}
\end{align}
and hence $\d_{\L_{1}|_{A}, \L_{2}|_{A}} \leq \d_{L_{1}, \L_{2}}$.
Here we use pure dimensionality of $X$ to deduce $[A] = c_{1}(\L_{1}) \cap [X]$ as cycle classes on $X$
(cf. \cite[Lemma 1.7.2 or Proposition 2.6 (d)]{fulintersec}).
By the same argument, we have $\d_{\L_{1}|_{B}, \L_{2}|_{B}} \leq \d_{\L_{1}, \L_{2}}$.
Therefore we get
\begin{align}
&|h^{0}(X, \mathcal{F} \otimes \L^{ \otimes n+1}) - h^{0}(X, \mathcal{F} \otimes \L^{ \otimes n})  |\\
&\leq
2C \rank( \mathcal{F})\d_{\L_{1}, \L_{2}} n^{\dim X -1 } + O(n^{\dim X - 2})
\end{align}
where the implicit constant depends at most on $A, B, \mathcal{F}, \L, \L_{1}, \L_{2}$.
 Thus we have
 \begin{align}
& h^{0}(X, \mathcal{F} \otimes \L^{ \otimes n}) \\
& \leq
h^{0}(X, \mathcal{F})
 + \sum_{j=0}^{n-1} \big( 2C \rank( \mathcal{F})\d_{\L_{1}, \L_{2}} j^{\dim X -1 } + O(j^{\dim X - 2}) \big)\\
 &\leq
 C' \rank( \mathcal{F}) \d_{\L_{1}, \L_{2}} n^{\dim X} + O(n^{\dim X -1})
 \end{align}
where $C'$ depends at most on $\dim X$, and
the implicit constant depends at most on $A, B, \mathcal{F}, \L, \L_{1}, \L_{2}$.
By taking infimum of the implicit constants varying $A,B$, we see
that the implicit constant can be chosen depending only on $X, \mathcal{F}, \L, \L_{1}, \L_{2}$.
\end{proof}

\begin{lemma}\label{lem:hpcutregemb}
Let $k$ be an infintie field.
Let $X$ be a projective variety over $k$ of dimension $N \geq 0$.
Let $H$ be a very ample divisor on $X$ and $ \mathfrak{d} \subset |H|$ be a linear system 
defining a closed immersion.
Let $Y \subset X$ be a regular embedding of codimension $e \geq 0$.
Let $0 \leq i \leq N-e$.
Then for a general sequence $H_{1},\dots , H_{i} \in \mathfrak{d}$,
\begin{align}
Y \cap H_{1} \cap \cdots \cap H_{i} \subset  H_{1} \cap \cdots \cap H_{i} 
\end{align}
is regular embedding of codimension $e$.
\end{lemma}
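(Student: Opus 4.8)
The plan is to isolate the case $i=1$ as the essential content and to deduce the general case by induction on $i$, applying the $i=1$ case successively to the ambient varieties $X,\ H_{1},\ H_{1}\cap H_{2},\dots$ and transporting the ``general position'' conditions along the linear projections $\mathfrak{d}\dashrightarrow\mathfrak{d}|_{H_{1}}\dashrightarrow\cdots$. Everything reduces to a local statement: being a regular embedding of codimension $e$ means that at each point $y$ of $Y\cap H_{1}$, the ideal of $Y\cap H_{1}$ in $\O_{H_{1},y}=\O_{X,y}/(h)$ is generated by a regular sequence of length $e$, where $h\in\m_{y}$ denotes a local equation of the effective Cartier divisor $H_{1}$ (it exists, and is $\O_{X,y}$-regular since $X$ is integral).

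For the core of the $i=1$ case, fix $y\in Y\cap H_{1}$ and write the ideal of $Y$ as $I_{y}=(t_{1},\dots,t_{e})$ with $t_{1},\dots,t_{e}$ an $\O_{X,y}$-regular sequence, using the hypothesis that $Y\subset X$ is a regular embedding of codimension $e$. The key elementary observation is that if $h$ is a nonzerodivisor on $\O_{Y,y}=\O_{X,y}/(t_{1},\dots,t_{e})$, then $(t_{1},\dots,t_{e},h)$ is an $\O_{X,y}$-regular sequence, hence so is $(h,t_{1},\dots,t_{e})$ by permutability of regular sequences in a Noetherian local ring; consequently the images $\bar t_{1},\dots,\bar t_{e}$ form a regular sequence in $\O_{X,y}/(h)$ which generates the ideal of $Y\cap H_{1}$ there, and being a length-$e$ regular sequence in a Noetherian local ring this ideal has height exactly $e$. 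It therefore suffices to choose $H_{1}$ so that its local equation is a nonzerodivisor on $\O_{Y,y}$ for every $y\in Y\cap H_{1}$, and this holds once $\Supp(H_{1})$ contains no point of $\operatorname{Ass}(Y)$: if $h$ annihilated a nonzero element of $\O_{Y,y}$ it would lie in some associated prime of $\O_{Y,y}$, which corresponds to a point $y_{j}\in\operatorname{Ass}(Y)$ with $y\in\overline{\{y_{j}\}}$, forcing $h\in\m_{X,y_{j}}$, i.e.\ $y_{j}\in\Supp(H_{1})$.

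Now $\operatorname{Ass}(Y)$ is finite ($Y$ is Noetherian), and since $\mathfrak{d}$ is base point free, for each $y_{j}$ the locus $\{H\in\mathfrak{d}\mid y_{j}\in\Supp(H)\}$ is a proper linear subspace, whose complement is a nonempty open subset of $\mathfrak{d}$. If $N\le1$ then $i\le N-e$ forces $e=0$ and $i\le1$ and the statement is trivial (for $i=1$, $e=0$ gives $Y=X$, hence $Y\cap H_{1}=H_{1}$); so assume $N\ge2$. Intersecting the finitely many open sets above — nonempty since $k$ is infinite — with the nonempty Bertini locus on which $H_{1}$ is irreducible and reduced of dimension $N-1$ (cf.\ \cite{joinintersec}, as already used in the proof of \cref{lem:glsecthick}), we obtain a nonempty open $U_{1}\subset\mathfrak{d}$ that both realizes the case $i=1$ and makes $H_{1}$ a projective variety of dimension $N-1$, as the recursion requires.

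For the inductive step, given $H_{1}\in U_{1}$ we apply the lemma with $(N,e,i)$ replaced by $(N-1,e,i-1)$ — valid since $i-1\le(N-1)-e$ — to $H_{1}$, the regular embedding $Y\cap H_{1}\subset H_{1}$ of codimension $e$ just obtained, the very ample $\O_{X}(H)|_{H_{1}}$, and the image linear system $\mathfrak{d}|_{H_{1}}$ (still defining a closed immersion of $H_{1}$). This produces families of nonempty open sets in $\mathfrak{d}|_{H_{1}}$ defining a general sequence $H_{2}',\dots,H_{i}'\in\mathfrak{d}|_{H_{1}}$; pulling them back through the dominant linear projection $p\colon\mathfrak{d}\dashrightarrow\mathfrak{d}|_{H_{1}}$ (defined on the nonempty open complement of the divisors through $H_{1}$) yields the families $\{U_{j,H_{1},\dots,H_{j-1}}\}_{j}$ on $\mathfrak{d}$, and for $H_{j}$ chosen in them, with $H_{j}'=p(H_{j})$ and $H_{j}\cap H_{1}=H_{j}'$ inside $H_{1}$, we get $Y\cap H_{1}\cap\cdots\cap H_{i}=(Y\cap H_{1})\cap H_{2}'\cap\cdots\cap H_{i}'\subset H_{1}\cap H_{2}'\cap\cdots\cap H_{i}'=H_{1}\cap\cdots\cap H_{i}$, a regular embedding of codimension $e$ by induction. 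The step I expect to be the main obstacle is the local commutative algebra — recognizing ``$h$ a nonzerodivisor on $\O_{Y,y}$'' as precisely the right condition and carrying out the permutation-of-regular-sequences argument — together with the care that $Y$ may be non-reduced with embedded associated points, so that the correct genericity demand is to avoid all of $\operatorname{Ass}(Y)$, not merely the irreducible components of $Y$; the Bertini input and the bookkeeping to fit into the ``general sequence'' formalism of the paper are routine.
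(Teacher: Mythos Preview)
Your proof is correct and shares the paper's overall architecture: induction reducing to $i=1$, Bertini to keep $H_{1}$ an integral variety, and a local regularity check at points of $Y\cap H_{1}$. The technical core of the $i=1$ step, however, is handled differently. The paper covers $X$ by affines $\Spec A$ on which the ideal of $Y$ is generated by a regular sequence $a_{1},\dots,a_{e}$, writes down the Koszul resolution of $A/I$, and chooses $H_{1}$ to avoid the associated points of every image and kernel appearing when that resolution is broken into short exact sequences; the restricted Koszul complex then stays exact on $H_{1}\cap\Spec A$, and the acyclicity criterion (Matsumura, \emph{Commutative Ring Theory}, Theorem~16.5) recovers the regular sequence in the local rings of $H_{1}$. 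Your route bypasses the Koszul complex entirely: it suffices that $H_{1}$ miss $\operatorname{Ass}(Y)$, since then the local equation $h$ is a nonzerodivisor on $\O_{Y,y}$, so $(t_{1},\dots,t_{e},h)$ is regular in $\O_{X,y}$, and permutability of regular sequences in a Noetherian local ring gives $(h,t_{1},\dots,t_{e})$ regular, whence $\bar t_{1},\dots,\bar t_{e}$ are regular in $\O_{X,y}/(h)$. This is more elementary and imposes a visibly lighter genericity constraint on $H_{1}$ (only the associated points of $Y$, rather than those of all intermediate sheaves in a resolution over a finite cover); the paper's approach, in exchange, is phrased as a general ``exactness is preserved under general hyperplane section'' principle that it also invokes elsewhere.
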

\begin{proof}
When $N = 0,1$, there is nothing to prove.
Thus we may assume $N \geq 2$.
By \cite[(3.4.10), Corollary 3.4.14]{joinintersec}, $H_{1} \cap \cdots \cap H_{i}$
is a projective variety of dimension $N-i$ for a general sequence $H_{1}, \dots, H_{i} \in \dd$
if $i \leq N-1$.
Thus it is enough to prove the statement for $i = 1$ (and $1 \leq  e \leq N-1$) case.

Let $\Spec A \subset X$ be an arbitrary open subscheme
on which the ideal $I \subset A$ of $Y \cap \Spec A \subset \Spec A$
is generated by an $A$-regular sequence $a_{1},\dots, a_{e} \in I$.
Then the Koszul complex gives a resolution of $A/I$:
\begin{equation}
\begin{tikzcd}[column sep=small]
0 \arrow[r] & K_{e}(a_{1},\dots,a_{e}) \arrow[r] & \cdots \arrow[r] &  K_{0}(a_{1},\dots,a_{e}) \arrow[r] &  A/I \arrow[r] & 0 \label{eq:koszulresol}
\end{tikzcd}
\end{equation}
is exact (cf.  \cite[Theorem 16.5]{MatRing}).

In general, if we are given an exact sequence of coherent sheaves 
\begin{equation}
\begin{tikzcd}
0 \arrow[r] & \mathcal{F}' \arrow[r] & \mathcal{F} \arrow[r] & \mathcal{F}'' \arrow[r] & 0
\end{tikzcd}
\end{equation}
on an open subset $U \subset X$, then for a general $H \in \dd$, the restriction
\begin{equation}
\begin{tikzcd}
0 \arrow[r] & \mathcal{F}'|_{H \cap U} \arrow[r] & \mathcal{F}|_{H \cap U} \arrow[r] & \mathcal{F}''|_{H \cap U} \arrow[r] & 0
\end{tikzcd}
\end{equation}
is exact.
Indeed, if we pick $H \in \dd$ so that it does not contain the associated points of 
$ \mathcal{F}', \mathcal{F}, \mathcal{F}''$, then we have the commutative diagram
\begin{equation}
\begin{tikzcd}
   & 							0    	\arrow[d]		&        0 	\arrow[d]	   &                                      &\\
0 \arrow[r] & \mathcal{F}' \otimes \O_{U}(-H|_{U})   \arrow[r] \arrow[d]  & \mathcal{F}'  \arrow[r] \arrow[d]& \mathcal{F}'|_{H \cap U} \arrow[r] \arrow[d]& 0\\
0 \arrow[r] & \mathcal{F} \otimes \O_{U}(-H|_{U})   \arrow[r] \arrow[d] & \mathcal{F}  \arrow[r] \arrow[d]& \mathcal{F}|_{H \cap U} \arrow[r] \arrow[d]& 0\\
0 \arrow[r] & \mathcal{F}'' \otimes \O_{U}(-H|_{U})   \arrow[r] \arrow[d] & \mathcal{F}''  \arrow[r] \arrow[d]& \mathcal{F}''|_{H \cap U} \arrow[r] \arrow[d]& 0\\
       & 							0    			&        0 	   	   &                   0                   &
\end{tikzcd}
\end{equation} 
where all the rows and columns are exact.
Then by the snake lemma, $ \mathcal{F}'|_{H \cap U} \to \mathcal{F}|_{H \cap U}$ is injective and we are done.

Applying this to short exact sequences obtained by taking images and kernels of the morphisms in 
\cref{eq:koszulresol}, we see that for a general $H \in \dd$, the restriction of \cref{eq:koszulresol}
to $H \cap \Spec A$ is exact (and $H$ is irreducible and reduced).
If the ideal of $H \cap \Spec A$ is $\p \subset A$, this means 
\begin{align}
0 \longrightarrow & K_{e}(a_{1},\dots,a_{e}) \otimes_{A} A/\p \longrightarrow \cdots \\
&\cdots \longrightarrow   K_{0}(a_{1},\dots,a_{e}) \otimes_{A} A/\p \longrightarrow A/(I + \p) \longrightarrow 0 
\end{align}
is exact.
Thus by \cite[Theorem 16.5]{MatRing}, for any prime ideal $\q \subset A/\p$ with $IA/\p \subset \q$,
$a_{1},\dots,a_{e}$ is an $(A/\p)_{\q}$-regular sequence generating $I(A/\p)_{\q}$. 
This means $Y \cap \Spec A \cap H \subset  \Spec A \cap H$ is regular embedding of codimension $e$.
Covering $X$ by finitely many such $\Spec A$, we are done.
\end{proof}

\if0

\begin{proposition}\label{prop:boundoftaudyn}
Let $k$ be an algebraically closed field of characteristic zero.
Let $X$ be a regular projective variety over $k$ of dimension $N \geq 0$.
Let $f \colon X \longrightarrow X$ be a surjective morphism.
Let $Y \subset X$ be a proper closed subscheme of pure dimension $l$.
Suppose $Y \subset X$ is a regular embedding.
Let $H$ be a very ample divisor on $X$.
Then there is $C > 0$ depending only on $X, Y, H$ such that
for any $n \geq 0$, $i=0,\dots, l$, and a general sequence $H_{1}, \dots, H_{i} \in |H|$
(general subject to $n$), we have
\begin{align}
\tau(f^{-n}(Y) \cap H_{1} \cap \cdots \cap H_{i}, H_{1} \cap \cdots \cap H_{i}) \leq C ((f^{n})^{*} H^{N-i} \cdot H^{i}  ).
\end{align}
\end{proposition}
\begin{proof}
Let $i \in \{0, \dots, l\}$ and $H_{1},\dots, H_{i} \in |H|$ be a general sequence.
Write $V = H_{1} \cap \cdots \cap H_{i}$.
By \cite[(3.4.10), Corollary 3.4.14]{joinintersec}, we may assume $V$ is a regular projective variety.
Let  $n \geq 0$ and $\I \subset \O_{V}$ be the ideal sheaf of $f^{-n}(Y) \cap V \subset V$.
Let us consider the blow up of $V$ along $\I$:
\begin{equation}
\begin{tikzcd}
\Bl_{\I}V \arrow[d] \arrow[r,hookleftarrow] & P(C_{f^{-n}(Y)\cap V}V) \arrow[d] \arrow[r,phantom,"=:"] &[-2em]  E\\
V \arrow[r,hookleftarrow]  & f^{-n}(Y)\cap V  &
\end{tikzcd}
\end{equation}
Here $ P(C_{f^{-n}(Y)\cap V}V)$ is the projectivized normal cone to $f^{-n}(Y)\cap V$, i.e.
\begin{align}
P(C_{f^{-n}(Y)\cap V}V) = \Proj \oplus_{m \geq 0} \I^{m}/\I^{m+1}.
\end{align}
By \cref{lem:relatetobup}, there is $m_{0} \geq 1$ such that
\begin{align}
h^{0}(V, \O_{V} / \I^{m}) \leq h^{0}(V, \O_{V}/\I^{m_{0}}) + \sum_{j = m_{0}}^{m-1} h^{0}(E, \O_{E}(j)) \label{eq:h0VOVImbound}
\end{align}
for $m \geq m_{0}$.

Now we claim that there is $C > 0$ depending only on $X,Y$, and $H$ such that
\begin{align}
h^{0}(E, \O_{E}(j)) \leq C ((f^{n})^{*}H^{N-i} \cdot H^{i}) j^{N-i-1} + O(j^{N-i-2}) \label{eq:keyboundh0EOj}
\end{align}
for all $j \geq 1$ where the implicit constant depends at most on $X, Y, H, f, n, V$, and $E$.
Assume for the moment that \cref{eq:keyboundh0EOj} has been established.
Then plugging into \cref{eq:h0VOVImbound}, we have
\begin{align}
h^{0}(V, \O_{V}/\I^{m}) \leq h^{0}(V, \O_{V}/\I^{m_{0}}) + C ((f^{n})^{*}H^{N-i} \cdot H^{i}) m^{N-i} + O(m^{N-i-1})
\end{align}
for all $m \geq m_{0}$, where the implicit constant depends at most on $X, Y, H, f, n, V, E$ and independent of $m$.
Thus we get
\begin{align}
\tau(f^{-n}(Y) \cap V, V) = \limsup_{m \to \infty} \frac{h^{0}(V, \O_{V}/\I^{m})}{m^{N-i}} \leq C ((f^{n})^{*}H^{N-i} \cdot H^{i}).
\end{align}

Now we start to prove \cref{eq:keyboundh0EOj}.
Consider the following commutative diagram:
\begin{equation}
\begin{tikzcd}
E \arrow[r,phantom,"="] &[-2em] P(C_{f^{-n}(Y)\cap V} V) \arrow[d,"q"] \arrow[r] & P(C_{f^{-n}(Y)}X) \arrow[d] \arrow[r] 
& P(C_{Y}X) \arrow[d,"p"] \\
& f^{-n}(Y)\cap V \arrow[d,hook] \arrow[r] &  f^{-n}(Y) \arrow[d,hook]  \arrow[r] & Y \arrow[d,hook] \\
& V \arrow[r] & X \arrow[r,"f^{n}"] & X .
\end{tikzcd}
\end{equation}
Since $\O_{P(C_{Y}X)}(1)$ is $p$-ample and $\O_{Y}(H):= \O_{X}(H)|_{Y}$ is very ample,
we can pick and fix $k_{1},k_{2} \geq 1$ such that
\begin{align}
\O_{P(C_{Y}X)}(k_{1}) \otimes p^{*}\O_{Y}(k_{2}H),\ \O_{P(C_{Y}X)}(k_{1}+1) \otimes p^{*}\O_{Y}(k_{2}H)
\end{align}
are both globally generated (cf. \cite[Part One, 1, 1.7]{Lazpos1}).
Thus 
\begin{align}
\O_{E}(k_{1}) \otimes q^{*}\O_{f^{-n}(Y) \cap V}(k_{2}(f^{n})^{*}H),\ \O_{E}(k_{1}+1) \otimes q^{*}\O_{f^{-n}(Y) \cap V}(k_{2}(f^{n})^{*}H)
\end{align}
are globally generated, where 
\begin{align}
\O_{f^{-n}(Y) \cap V}(k_{2}(f^{n})^{*}H) := \O_{X}(k_{2}(f^{n})^{*}H)|_{f^{-n}(Y) \cap V}.
\end{align}
Let us write
\begin{align}
&\L = \O_{E}(k_{1}) \otimes q^{*}\O_{f^{-n}(Y) \cap V}(k_{2}(f^{n})^{*}H)\\
&\M = \O_{E}(k_{1}+1) \otimes q^{*}\O_{f^{-n}(Y) \cap V}(k_{2}(f^{n})^{*}H).
\end{align}
Then we have $\O_{E}(1) = \L \otimes \M^{-1}$.
Note that $E$ is pure dimension $\dim V - 1 = N-i-1$ since it is an effective Cartier divisor on $\Bl_{\I}V$.
Thus by \cref{lem:arr}, there is $C > 0$ depending only on $N$ such that
\begin{align}
h^{0}(E, \O_{E}(m)) \leq C \d_{\L, \M} m^{N-i-1} + O(m^{N-i-2})
\end{align}
where the implicit constant depends at most on $E, \O_{E}(1), \L, \M$.
Here 
\begin{align}
\d_{\L,\M} = \max\{ c_{1}(\L)^{j}\cap c_{1}(\M)^{\dim E - j} \cap [E] \mid 0 \leq j \leq \dim E \}.
\end{align}
By the form of $\L, \M$, we have
\begin{align}
\d_{\L,\M} \leq C' \max\big( \{ \g_{j} \mid 0 \leq j \leq \dim E\} \cup \{0\}\big)
\end{align}
where
\begin{align}
\g_{j} = ( c_{1}(q^{*} (f^{n})^{*}H)^{j} \cap c_{1}(\O_{E}(1))^{\dim E - j} \cap [E]  ),
\end{align}
and $C' > 0$ is a constant depending only on $k_{1}, k_{2}, \dim E$.
To end the proof, it is enough to show that $\g_{j} \leq C'' ((f^{n})^{*}H^{N-i} \cdot H^{i})$
for some constant $C'' > 0$ depending only on $X, Y$, and $H$.

By the projection formula, we have
\begin{align}
\g_{j} & = ((f^{n})^{*}H^{j} \cdot q_{*} ( c_{1}(\O_{E}(1))^{\dim E - j} \cap [E] ))\\
& = ((f^{n})^{*}H^{j} \cdot  s(f^{-n}(Y)\cap V, V)_{j})
\end{align}
where $s(f^{-n}(Y)\cap V, V)_{j}$ is the $j$-dimensional part of the Segre class of closed subscheme $f^{-n}(Y) \cap V \subset V$
(cf. \cite[Corollary 4.2.2]{fulintersec}).
Now recall we assume $Y \subset X$ is a regular embedding of codimension $N - l$.
Since $f$ is flat, $f^{-n}(Y) \subset X$ is also a regular embedding of codimension $N - l$.
Moreover, since $H_{1}, \dots, H_{i} \in |H|$ is a general sequence, by \cref{lem:hpcutregemb},
$f^{-n}(Y) \cap V \subset V$ is also a regular embedding of codimension $N-l$. 
Therefore, we have the following relations between normal bundles:
\begin{align}
N_{f^{-n}(Y)\cap V} V \simeq (N_{f^{-n}(Y)}X )|_{f^{-n}(Y) \cap V} \simeq ((f^{n})^{*}N_{Y}X)|_{f^{-n}(Y) \cap V}.
\end{align}
Thus as cycle classes on $f^{-n}(Y)$, we have
\begin{align}
s(f^{-n}(Y) \cap V, V) &= c(N_{f^{-n}(Y)\cap V} V )^{-1} \cap [f^{-n}(Y) \cap V]\\
& = c(N_{f^{-n}(Y)}X)^{-1} \cap [f^{-n}(Y) \cap V] \\
& = c(N_{f^{-n}(Y)}X)^{-1} \cap c_{1}(H)^{i} \cap [f^{-n}(Y)]\\
& = c_{1}(H)^{i} \cap (f^{n})^{*}(c(N_{Y}X)^{-1} \cap [Y]).
\end{align}
Here the first equality follows from \cite[Proposition 4.1(a)]{fulintersec}, 
where $c(-)$ stands for the total Chern class.
The second equality follows from projection formula (cf. \cite[Theorem 3.2(c)]{fulintersec}).
For the third equality, 
we use \cite[Lemma 1.7.2]{fulintersec}, 
pure dimensionality of $f^{-n}(Y)$, and that $H_{1}, \dots , H_{i}$ is a general sequence.
The last equality follows from flatness of $f^{n}$ and \cite[Theorem 3.2 (d)]{fulintersec}.

Choose a $(j+i)$-cycle $\a$ on $X$ representing $(j+i)$-dimensional part of $c(N_{Y}X)^{-1} \cap [Y]$.
Note that this can be chosen depending only on $X, Y$.
Then we have
\begin{align}
\g_{j} = ((f^{n})^{*}H^{j} \cdot H^{i} \cdot (f^{n})^{*} \a).
\end{align}

\begin{claim}\label{claim:boundingbyci}
There is $a \in \Z_{\geq 1}$ depending only on $X, H$, and $\a$ such that
\begin{align}
a c_{1}(H)^{N-j-i} \cap [X] - \a
\end{align}
is represented by an effective cycle on $X$.
\end{claim}
Suppose for the moment we have proven the claim.
Since the flat pull-back by $f^{n}$ maps effective cycles to effective cycles, we have
\begin{align}
\g_{j} \leq a ((f^{n})^{*}H^{j} \cdot H^{i} \cdot  (f^{n})^{*}H^{N-j-i} ) = a ((f^{n})^{*}H^{N-i} \cdot H^{i})
\end{align}
and we are done.

\begin{proof}[Proof of \cref{claim:boundingbyci}]
It is enough to prove the case where $\a = [W]$ for some $(j+i)$-dimensional subvariety $W \subset X$.
Let $\J \subset \O_{X}$ be the ideal sheaf of $W \subset X$.
Take $r \geq 1$ so that $\J \otimes \O_{X}(rH)$ is globally generated.
Let $\dd \subset |rH|$ be the linear system defined by $H^{0}(X, \J \otimes \O_{X}(rH))$.
Then for a general sequence $D_{1} ,\dots, D_{N-j-i} \in \dd$, 
$D_{t}$ can be restricted on $D_{1} \cap \cdots \cap D_{t-1}$
as an effective Cartier divisor for $t = 2, \dots, N-j-i$.
In particular 
$D_{1} \cap \cdots \cap D_{N-j-i}$
is pure dimension $j+i$ and hence $W$ is an irreducible component of $D_{1} \cap \cdots \cap D_{N-j-i}$.
By \cite[Lemma 1.7.2]{fulintersec},
\begin{align}
D_{1} \cdots  D_{N-j-i} \cdot [X] - [W]
\end{align}
is an effective cycle 
($D_{1} \cdots  D_{N-j-i} \cdot [X]$ is the intersection cycle defined in 
\cite[Chapter 2, 2.3]{fulintersec}).
As cycle classes on $X$, we have
\begin{align}
D_{1} \cdots  D_{N-j-i} \cdot [X] = r^{N-j-i} c_{1}(H)^{N-j-i} \cap [X]
\end{align}
and we are done.
\end{proof}
This completes the proof of \cref{prop:boundoftaudyn}.
\end{proof}

\fi


The following is the key for the proof of our main theorem.

\begin{proposition}\label{prop:boundoftaudyn}
Let $k$ be an algebraically closed field of characteristic zero.
Let $X$ be a regular projective variety over $k$ of dimension $N \geq 0$.
Let $f \colon X \dashrightarrow X$ be a dominant rational map.
Let $Y \subset X$ be a proper closed subscheme of pure dimension $l$.
Suppose $Y \subset X$ is a regular embedding and $Y \subset X_{f}^{\rm back}$.
Let $H$ be a very ample divisor on $X$.
Then there is $C > 0$ depending only on $X, Y, H$ such that
for any $n \geq 0$, $i=0,\dots, l$, and a general sequence $H_{1}, \dots, H_{i} \in |H|$
(general subject to $n$), we have
\begin{align}
\tau(f^{-n}(Y) \cap H_{1} \cap \cdots \cap H_{i}, H_{1} \cap \cdots \cap H_{i}) \leq C ((f^{n})^{*} (H^{N-i} )\cdot H^{i}  ).
\end{align}
Here $(f^{n})^{*} (H^{N-i} )$ is the $i$-cycle class on $X$ defined as follows.
If $\pi \colon \widetilde{X} \longrightarrow X$ is a resolution of indeterminacy of $f^{n}$,
then $(f^{n})^{*} (H^{N-i} ) = \pi_{*} (f^{n} \circ \pi)^{*}(H^{N-i})$. This is independent of the choice of $\pi$.
\end{proposition}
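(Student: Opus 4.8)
The proof follows the same overall scheme as when $f$ is a morphism: one bounds $h^{0}(E,\O_{E}(m))$ for the exceptional divisor $E$ of the blow-up of a general complete-intersection slice of $X$ along $f^{-n}(Y)$, and turns this into the claimed estimate for $\tau$ through \cref{lem:relatetobup} and \cref{lem:arr}. The one genuinely new ingredient is a geometric analysis of $f^{-n}(Y)$ which replaces the use of flatness. Set $W=X\setminus I_{f^{n}}$, so that $f^{n}|_{W}\colon W\to X$ is a morphism. Since $Y\subset X_{f}^{\rm back}$, the properties of the iteratively finite locus from \cite[Appendix B.1]{matsuzawa-note-ad-dls} give: $f^{-n}(Y)=(f^{n}|_{W})^{-1}(Y)$ is a closed subscheme of $X$ contained in $W$; $f^{n}$ restricts to a finite surjection $f^{-n}(Y)\to Y$; and, for a resolution $\pi\colon\widetilde X\to X$ of the indeterminacy of $f^{n}$ with $\widetilde X$ smooth (characteristic zero), $\pi$ is an isomorphism over a neighbourhood of $f^{-n}(Y)$, so that, writing $g:=f^{n}\circ\pi$, the closed subscheme $g^{-1}(Y)$ of $\widetilde X$ is carried isomorphically onto $f^{-n}(Y)$ by $\pi$. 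As $W$ is Cohen--Macaulay and $f^{-n}(Y)$ has the expected dimension $l$, the local regular sequence cutting out $Y$ pulls back to a regular sequence; hence $f^{-n}(Y)\subset X$ (and $g^{-1}(Y)\subset\widetilde X$) is a regular embedding of codimension $e:=N-l$, pure of dimension $l$, with $N_{f^{-n}(Y)}X\simeq((f^{n})^{*}N_{Y}X)|_{f^{-n}(Y)}$.

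For a general sequence $H_{1},\dots,H_{i}\in|H|$, \cref{lem:hpcutregemb} together with \cite{joinintersec} makes $V:=H_{1}\cap\cdots\cap H_{i}$ a smooth projective variety of dimension $N-i$ and $f^{-n}(Y)\cap V\subset V$ a regular embedding of codimension $e$, pure of dimension $l-i$. Its exceptional divisor $E$ in the blow-up of $V$ is then the projective bundle $\mathbb{P}(((f^{n})^{*}N_{Y}X)|_{f^{-n}(Y)\cap V})$ over $f^{-n}(Y)\cap V$, pure of dimension $N-i-1$, with $\O_{E}(1)$ the pullback of $\O_{\mathbb{P}(N_{Y}X)}(1)$ along the induced morphism to $\mathbb{P}(N_{Y}X)$. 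Choosing $k_{1},k_{2}\geq 1$ for which $\O_{\mathbb{P}(N_{Y}X)}(k_{1})\otimes p^{*}\O_{Y}(k_{2}H)$ and $\O_{\mathbb{P}(N_{Y}X)}(k_{1}+1)\otimes p^{*}\O_{Y}(k_{2}H)$ are globally generated ($p\colon\mathbb{P}(N_{Y}X)\to Y$), pulling these back to globally generated invertible sheaves $\L,\M$ on $E$ with $\O_{E}(1)=\L\otimes\M^{-1}$, and using \cref{lem:relatetobup} and \cref{lem:arr}, the task reduces to proving
\[
\gamma_{j}:=\bigl(c_{1}(\O_{E}(1))^{\dim E-j}\cdot c_{1}(q^{*}(f^{n})^{*}H)^{j}\cdot[E]\bigr)\ \leq\ C\,\bigl((f^{n})^{*}(H^{N-i})\cdot H^{i}\bigr)
\]
for $0\leq j\leq\dim E$, with $q\colon E\to f^{-n}(Y)\cap V$ and $C$ independent of $n$ and of the general sequence.

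Just as when $f$ is a morphism, the Segre class formalism and the identifications above give $\gamma_{j}=\bigl((f^{n})^{*}H^{j}\cdot H^{i}\cdot(f^{n})^{*}\alpha\bigr)$ with $\alpha:=s_{l-i-j}(N_{Y}X)\cap[Y]$, a cycle of dimension $j+i$ that is fixed once $X$ and $Y$ are fixed. The only step that used flatness of a self-morphism is now carried out on $\widetilde X$: $g$ is a morphism of smooth varieties (hence a local complete intersection morphism), generically étale (characteristic zero), and $g^{-1}(Y)\cong f^{-n}(Y)$ is a regular embedding of codimension $e=\codim(Y,X)$, so the refined Gysin homomorphism $g^{!}$ satisfies $g^{!}[X]=[\widetilde X]$, $g^{!}[Y]=[g^{-1}(Y)]$ and commutes with Chern classes \cite{fulintersec}; transporting the computation to $\widetilde X$ gives $\gamma_{j}=\bigl(c_{1}(g^{*}H)^{j}\cdot c_{1}(\pi^{*}H)^{i}\cdot g^{!}\alpha\bigr)$. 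Writing $\alpha=\sum_{\beta}m_{\beta}[W_{\beta}]$ with $W_{\beta}\subset Y$ integral, and, as in the morphism case, dominating each $[W_{\beta}]$ by $a\,c_{1}(\O_{X}(H))^{N-j-i}\cap[X]$ modulo an effective cycle $[C_{\beta}]$ cut out by general degree-$a$ hypersurfaces through $W_{\beta}$ (with $a$ depending only on $X,Y,H$), one pulls these hypersurfaces back by $g$: Bertini for the base-point-free system $|g^{*}(aH)|$, generic étaleness (so $g^{-1}(W_{\beta})$ appears with multiplicity one), and $W_{\beta}\subset X_{f}^{\rm back}$ (so $g^{-1}(W_{\beta})$ has the expected dimension) show that $g^{!}[C_{\beta}]$ and $g^{!}[W_{\beta}]$ are effective. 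Since $g^{*}H$ and $\pi^{*}H$ are nef, this yields $\gamma_{j}\leq a\bigl(\textstyle\sum_{m_{\beta}>0}m_{\beta}\bigr)\bigl((f^{n})^{*}(H^{N-i})\cdot H^{i}\bigr)$, and the maximum over the finitely many pairs $(i,j)$ gives the desired uniform $C$.

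The main obstacle is this last step, and within it the two features that distinguish the rational from the morphism case: expressing and estimating $(f^{n})^{*}\alpha$ via the refined Gysin along a resolution $\pi=\pi_{n}$ of $f^{n}$, in a way compatible with the meaning of $(f^{n})^{*}(H^{N-i})$ in the statement; and, above all, arranging that $g_{n}^{!}$ applied to the \emph{fixed} effective cycles remains effective \emph{uniformly in $n$}, despite $\pi_{n}$ and $g_{n}$ varying with $n$. This is exactly where the hypothesis $Y\subset X_{f}^{\rm back}$ is indispensable: it provides the isomorphism $g^{-1}(Y)\cong f^{-n}(Y)$ and the finiteness of $f^{n}$ over $Y$ near $f^{-n}(Y)$ \cite[Appendix B.1]{matsuzawa-note-ad-dls}, and without it the statement already fails, see \cref{ex:backnonfinCE}.
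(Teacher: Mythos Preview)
Your outline matches the paper's proof closely: blow up $V=H_{1\cdots i}$ along $f^{-n}(Y)\cap V$, bound $h^{0}(E,\O_{E}(m))$ via \cref{lem:relatetobup} and \cref{lem:arr} using the globally generated twists of $\O_{P(C_{Y}X)}(1)$, identify the Segre class through the normal-bundle relation, and reduce everything to bounding the numbers $\gamma_{j}$ by $((f^{n})^{*}(H^{N-i})\cdot H^{i})$. The paper differs from your write-up in two inessential and one essential respect.

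The inessential differences: the paper works on the graph $\Gamma_{f^{n}}$ rather than on a smooth resolution (over a neighbourhood of $Y\subset X_{f}^{\rm back}$ the second projection $h$ is finite flat and the graph is already isomorphic to an open in $X$, hence regular, so no further resolution is needed); and the paper first contracts with $c_{1}(H|_{Y})^{j}$ to obtain an $i$-cycle $\beta=c_{1}(H|_{Y})^{j}\cap\alpha$ on $Y$ and only then decomposes into irreducible components $W_{t}\subset Y$ of dimension $i$, so that the final comparison involves only $\pi_{*}$ of an effective cycle against the fixed ample class $H^{i}$ on $X$, rather than against the $n$-dependent nef class $c_{1}(g^{*}H)^{j}$ as in your version. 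Both orderings work.

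The essential point is the effectivity step, your ``main obstacle''. Your sentence ``Bertini for the base-point-free system $|g^{*}(aH)|$'' does not do the job: the sublinear system of divisors through $W_{\beta}$ has base locus $W_{\beta}$, and its pull-back by $g$ has base locus $g^{-1}(W_{\beta})$, so Bertini gives nothing there; moreover the residual components $C_{\beta}$ of $D_{1}\cap\cdots\cap D_{N-j-i}$ on $X$ need not lie in $X_{f}^{\rm back}$, so $g^{-1}(C_{\beta})$ can have excess dimension and $g^{!}[C_{\beta}]$ need not be effective. The paper's Claim handles this by an explicit inductive construction on the graph: choose $D_{t+1}\in\dd\subset|aH|$ missing the associated points of $D_{1}\cap\cdots\cap D_{t}$ outside $W$ on $X$ \emph{and} the $h$-images of the associated points of $h^{*}D_{1}\cap\cdots\cap h^{*}D_{t}$ outside $h^{-1}(W)$ on $\Gamma_{f^{n}}$. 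Away from $h^{-1}(W)$ this forces a regular sequence directly; over $h^{-1}(W)\subset h^{-1}(U)$ one uses that $h$ is flat over $U$ and $h^{-1}(U)$ is regular (hence Cohen--Macaulay), so the pulled-back sequence is regular because the intersection already has the correct dimension. This yields $[h^{*}D_{1}\cap\cdots\cap h^{*}D_{N-i}]-g^{*}[W]$ effective on $\Gamma_{f^{n}}$, which after $\pi_{*}$ and intersection with $H^{i}$ gives the bound. (Also, ``generic \'etaleness so $g^{-1}(W_{\beta})$ appears with multiplicity one'' is not the point: one needs $g^{*}[W]=[g^{-1}(W)]$ as cycles, and this comes from flatness of $h$ over $U$, not from \'etaleness.)
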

\begin{proof}
Let $i \in \{0, \dots, l\}$ and $H_{1},\dots, H_{i} \in |H|$ be a general sequence.
Write $V = H_{1} \cap \cdots \cap H_{i}$.
By \cite[(3.4.10), Corollary 3.4.14]{joinintersec}, we may assume $V$ is a regular projective variety.
Let  $n \geq 0$ and $\I \subset \O_{V}$ be the ideal sheaf of $f^{-n}(Y) \cap V \subset V$.
(Since $Y \subset X_{f}^{\rm back}$, $f^{-n}(Y)$ is naturally defined.)
Let us consider the blow up of $V$ along $\I$:
\begin{equation}
\begin{tikzcd}
\Bl_{\I}V \arrow[d] \arrow[r,hookleftarrow] & P(C_{f^{-n}(Y)\cap V}V) \arrow[d] \arrow[r,phantom,"=:"] &[-2em]  E\\
V \arrow[r,hookleftarrow]  & f^{-n}(Y)\cap V  &
\end{tikzcd}
\end{equation}
Here $ P(C_{f^{-n}(Y)\cap V}V)$ is the projectivized normal cone to $f^{-n}(Y)\cap V$, i.e.
\begin{align}
P(C_{f^{-n}(Y)\cap V}V) = \Proj \oplus_{m \geq 0} \I^{m}/\I^{m+1}.
\end{align}
By \cref{lem:relatetobup}, there is $m_{0} \geq 1$ such that
\begin{align}
h^{0}(V, \O_{V} / \I^{m}) \leq h^{0}(V, \O_{V}/\I^{m_{0}}) + \sum_{j = m_{0}}^{m-1} h^{0}(E, \O_{E}(j)) \label{eq:h0VOVImbound}
\end{align}
for $m \geq m_{0}$.

Now we claim that there is $C > 0$ depending only on $X,Y$, and $H$ such that
\begin{align}
h^{0}(E, \O_{E}(j)) \leq C ((f^{n})^{*}(H^{N-i}) \cdot H^{i}) j^{N-i-1} + O(j^{N-i-2}) \label{eq:keyboundh0EOj}
\end{align}
for all $j \geq 1$ where the implicit constant depends at most on $X, Y, H, f, n, V$, and $E$.
Assume for the moment that \cref{eq:keyboundh0EOj} has been established.
Then plugging into \cref{eq:h0VOVImbound}, we have
\begin{align}
h^{0}(V, \O_{V}/\I^{m}) \leq h^{0}(V, \O_{V}/\I^{m_{0}}) + C ((f^{n})^{*}(H^{N-i}) \cdot H^{i}) m^{N-i} + O(m^{N-i-1})
\end{align}
for all $m \geq m_{0}$, where the implicit constant depends at most on $X, Y, H, f, n, V, E$ and independent of $m$.
Thus we get
\begin{align}
\tau(f^{-n}(Y) \cap V, V) = \limsup_{m \to \infty} \frac{h^{0}(V, \O_{V}/\I^{m})}{m^{N-i}} \leq C ((f^{n})^{*}H^{N-i} \cdot H^{i}).
\end{align}

Now we start to prove \cref{eq:keyboundh0EOj}.
Consider the following commutative diagram:
\begin{equation}
\begin{tikzcd}
E \arrow[r,phantom,"="] &[-2em] P(C_{f^{-n}(Y)\cap V} V) \arrow[d,"q"] \arrow[r] & P(C_{f^{-n}(Y)}X) \arrow[d] \arrow[r] 
& P(C_{Y}X) \arrow[d,"p"] \\
& f^{-n}(Y)\cap V \arrow[d,hook] \arrow[r,hook,"\iota"] &  f^{-n}(Y) \arrow[d,hook]  \arrow[r,"g"] & Y \arrow[d,hook] \\
& V \arrow[r,hook] & X \arrow[r,"f^{n}",dashed] & X .
\end{tikzcd}
\end{equation}
Since $\O_{P(C_{Y}X)}(1)$ is $p$-ample and $H|_{Y}$ is very ample,
we can pick and fix $k_{1},k_{2} \geq 1$ such that
\begin{align}
\O_{P(C_{Y}X)}(k_{1}) \otimes p^{*}\O_{Y}(k_{2}H|_{Y}),\ \O_{P(C_{Y}X)}(k_{1}+1) \otimes p^{*}\O_{Y}(k_{2}H|_{Y})
\end{align}
are both globally generated (cf. \cite[Part One, 1, 1.7]{Lazpos1}).
Thus 
\begin{align}
&\L :=\O_{E}(k_{1}+1) \otimes q^{*}\O_{f^{-n}(Y) \cap V}(k_{2}(g \circ \iota)^{*}H|_{Y}),\\
&\M := \O_{E}(k_{1}) \otimes q^{*}\O_{f^{-n}(Y) \cap V}(k_{2}(g \circ \iota)^{*}H|_{Y})
\end{align}
are globally generated.
Then we have $\O_{E}(1) = \L \otimes \M^{-1}$.
Note that $E$ is pure dimension $\dim V - 1 = N-i-1$ since it is an effective Cartier divisor on $\Bl_{\I}V$.
Thus by \cref{lem:arr}, there is $C > 0$ depending only on $N$ such that
\begin{align}
h^{0}(E, \O_{E}(m)) \leq C \d_{\L, \M} m^{N-i-1} + O(m^{N-i-2})
\end{align}
where the implicit constant depends at most on $E, \O_{E}(1), \L, \M$.
Here 
\begin{align}
\d_{\L,\M} = \max\{ c_{1}(\L)^{j}\cap c_{1}(\M)^{\dim E - j} \cap [E] \mid 0 \leq j \leq \dim E \}.
\end{align}
By the form of $\L, \M$, we have
\begin{align}
\d_{\L,\M} \leq C' \max\big( \{ \g_{j} \mid 0 \leq j \leq \dim E\} \cup \{0\}\big)
\end{align}
where
\begin{align}
\g_{j} = ( c_{1}(q^{*} (g \circ \iota)^{*}H|_{Y})^{j} \cap c_{1}(\O_{E}(1))^{\dim E - j} \cap [E]  ),
\end{align}
and $C' > 0$ is a constant depending only on $k_{1}, k_{2}, \dim E$.
To end the proof, it is enough to show that $\g_{j} \leq C'' ((f^{n})^{*}(H^{N-i}) \cdot H^{i})$
for some constant $C'' > 0$ depending only on $X, Y$, and $H$.

By the projection formula, we have
\begin{align}
\g_{j} & = ((g \circ \iota)^{*}H|_{Y}^{j} \cdot q_{*} ( c_{1}(\O_{E}(1))^{\dim E - j} \cap [E] ))\\
& = ((g \circ \iota)^{*}H|_{Y}^{j} \cdot  s(f^{-n}(Y)\cap V, V)_{j})
\end{align}
where $s(f^{-n}(Y)\cap V, V)_{j}$ is the $j$-dimensional part of the Segre class of closed subscheme $f^{-n}(Y) \cap V \subset V$
(cf. \cite[Corollary 4.2.2]{fulintersec}).
Now recall we assume $Y \subset X$ is a regular embedding of codimension $N - l$.
Since $f^{n}$ is flat finite over an open neighborhood of $Y$, $f^{-n}(Y) \subset X$ is also a regular embedding of codimension $N - l$.
Moreover, since $H_{1}, \dots, H_{i} \in |H|$ is a general sequence, by \cref{lem:hpcutregemb},
$f^{-n}(Y) \cap V \subset V$ is also a regular embedding of codimension $N-l$. 
Therefore, we have the following relations between normal bundles:
\begin{align}
N_{f^{-n}(Y)\cap V} V \simeq (N_{f^{-n}(Y)}X )|_{f^{-n}(Y) \cap V} \simeq (g^{*}N_{Y}X)|_{f^{-n}(Y) \cap V}.
\end{align}
Thus as cycle classes on $f^{-n}(Y)$, we have
\begin{align}
s(f^{-n}(Y) \cap V, V) &= c(N_{f^{-n}(Y)\cap V} V )^{-1} \cap [f^{-n}(Y) \cap V]\\
& = c(N_{f^{-n}(Y)}X)^{-1} \cap [f^{-n}(Y) \cap V] \\
& = c(N_{f^{-n}(Y)}X)^{-1} \cap c_{1}(H)^{i} \cap [f^{-n}(Y)]\\
& = c_{1}(H)^{i} \cap g^{*}(c(N_{Y}X)^{-1} \cap [Y]).
\end{align}
Here the first equality follows from \cite[Proposition 4.1(a)]{fulintersec}, 
where $c(-)$ stands for the total Chern class.
The second equality follows from projection formula (cf. \cite[Theorem 3.2(c)]{fulintersec}).
For the third equality, 
we use \cite[Lemma 1.7.2]{fulintersec}, 
pure dimensionality of $f^{-n}(Y)$, and that $H_{1}, \dots , H_{i}$ is a general sequence.
The last equality follows from flatness of $g$ and \cite[Theorem 3.2 (d)]{fulintersec}.

Choose a $(j+i)$-cycle $\a$ on $Y$ representing $(j+i)$-dimensional part of $c(N_{Y}X)^{-1} \cap [Y]$.
Note that this can be chosen depending only on $X, Y$.
Then we have
\begin{align}
\g_{j} = (g^{*}H|_{Y}^{j} \cdot H^{i} \cdot g^{*} \a) = ( g^{*}(c_{1}(H|_{Y})^{j} \cap \a) \cdot H^{i}).
\end{align}
Let us fix an $i$-cycle $\b$ on $Y$ representing $c_{1}(H|_{Y})^{j} \cap \a$.
Consider the graph of $f^{n}$:
\begin{equation}
\begin{tikzcd}
\G_{f^{n}} \arrow[d,"\pi",swap] \arrow[rd,"h"]& \\
X \arrow[r,dashed,"f^{n}",swap]& X
\end{tikzcd}
\end{equation}
where $h:=f^{n} \circ \pi$.

\begin{claim}\label{claim:boundingbyci}
Let $W \subset Y$ be a closed subvariety of dimension $i$.
There is $a \in \Z_{\geq 1}$ depending only on $X, H$, and $W$ such that
there are $D_{1},\dots, D_{N-i} \in |aH|$ with the following properties:
local equations of $h^{*}D_{1},\dots, h^{*}D_{N-i}$ form a regular sequence 
at every point of $h^{*}D_{1}\cap \cdots \cap h^{*}D_{N-i}$, and 
\begin{align}
[h^{*}D_{1}\cap \cdots \cap h^{*}D_{N-i} ] - g^{*}[W]
\end{align}
is an effective cycle on $\G_{f^{n}}$.
Here $g^{*}[W]$ is considered as a cycle on $\G_{f^{n}}$ via the isomorphism 
$\pi^{-1}(f^{-n}(Y)) \simeq f^{-n}(Y)$.
\end{claim}

Suppose for the moment we have proven the claim.
Let us write $\b = \sum_{t=1}^{r}m_{t}[W_{t}]$, where $m_{t} \in \Z$ and $W_{t} \subset Y$
are closed subvariety of dimension $i$.
Applying \cref{claim:boundingbyci} for each $W_{t}$,
we get corresponding $a_{t} \in \Z_{\geq 1}$ and $D_{t,1},\dots ,D_{t,N-i} \in |a_{t}H|$. 
Since the push forward by $\pi$ maps effective cycles to effective cycles, we have
\begin{align}
\g_{j} &= (g^{*}\b \cdot H^{i}) \leq \sum_{t=1}^{r}|m_{t}| (\pi_{*} [h^{*}D_{t,1}\cap \cdots \cap h^{*}D_{t,N-i} ] \cdot H^{i})\\
&=\sum_{t=1}^{r}|m_{t}| a_{t}^{N-i} (\pi_{*}(h^{*}H^{N-i} )\cdot H^{i} ) 
= \big(\sum_{t=1}^{r}|m_{t}| a_{t}^{N-i} \big) ((f^{n})^{*}(H^{N-i}) \cdot H^{i}).
\end{align}
Since $\b$ is chosen depending only on $X, Y, H$, 
$\sum_{t=1}^{r}|m_{t}| a_{t}^{N-i} $ depends only on $X, Y, H$ as well.
Thus we are done.

\begin{proof}[Proof of \cref{claim:boundingbyci}]
Let $\J \subset \O_{X}$ be the ideal sheaf of $W$ considered as a closed subscheme of $X$.
Take $a \geq 1$ so that $\J \otimes \O_{X}(aH)$ is globally generated.
Let $\dd \subset |rH|$ be the linear system defined by $H^{0}(X, \J \otimes \O_{X}(aH))$.
Let $U \subset X$ be an open subset such that
$W \subset U$, $h^{-1}(U) \longrightarrow U$ is flat finite, and $h^{-1}(U)$ is regular.
(Such $U$ exists because of the assumption $W \subset Y \subset X_{f}^{\rm back}$.)

We construct $D_{t}$'s inductively.
First take any $D_{1} \in \dd$.
Suppose we have constructed $D_{1},\dots , D_{t} \in \dd$ for some $1 \leq t \leq N-i-1$
so that local equations of $h^{*}D_{1}, \dots , h^{*}D_{t}$ form regular sequence 
at every point of $h^{*}D_{1}\cap \cdots \cap h^{*}D_{t}$.
Now we take $D_{t+1} \in \dd$ so that it does not contain any
associated points of $D_{1} \cap \cdots \cap D_{t}$ outside $W$
and images of associated points of $h^{*}D_{1}\cap \cdots \cap h^{*}D_{t}$
outside $h^{-1}(W)$.

Then the resulting sequence $D_{1}, \dots, D_{N-i}$
has the following properties:
the local equations of $h^{*}D_{1},\dots, h^{*}D_{N-i}$ form a regular sequence at every point
of $h^{*}D_{1}\cap\cdots \cap h^{*}D_{N-i} \setminus h^{-1}(W)$;
$D_{1}\cap\cdots \cap D_{N-i}$ is pure dimension $i$.
Since $X$ is regular, hence Cohen-Macaulay, the second property implies the local equations of $D_{1},\dots, D_{N-i}$
form a regular sequence at every point of $D_{1} \cap \cdots \cap D_{N-i}$.
Since $h$ is flat over $U$, 
the local equations of $h^{*}D_{1},\dots, h^{*}D_{N-i}$ form a regular sequence at every point
of $h^{*}D_{1}\cap\cdots \cap h^{*}D_{N-i} \cap h^{-1}(U)$.

Finally, since $D_{t}$ are chosen from $\dd$, we have
$h^{-1}(W) \subset h^{*}D_{1}\cap\cdots \cap h^{*}D_{N-i}$ as schemes.
Since both sides of this inclusion have pure dimension $i$,
\begin{align}
[h^{*}D_{1}\cap\cdots \cap h^{*}D_{N-i}] - [h^{-1}(W)]
\end{align}
is an effective cycle.
Since $h$ is flat over $U$ (and $W \subset X_{f}^{\rm back}$),
we have $[h^{-1}(W)] = g^{*}[W]$ and we are done.
\end{proof}
This completes the proof of \cref{prop:boundoftaudyn}.
\end{proof}


\begin{proposition}\label{prop:ggcdpullbackbound}
Let $X$ be a smooth projective variety over $\QQ$ of dimension $N$.
Let $f \colon X \dashrightarrow X$ be a dominant rational map.
Let $Y \subset X$ be a proper closed subscheme of pure dimension $l$.
Suppose $Y \subset X$ is a regular embedding and $Y \subset X_{f}^{\rm back}$.
Let $H$ be a very ample divisor on $X$.
Then for any $\e >0$, there is $n_{0} \in \Z_{\geq 0}$
such that for all $n \geq n_{0}$, there is a proper closed subset $Z \subset X$
and $\g \in \R$ such that
\begin{align}
h_{f^{-n}(Y)} \leq (d_{N-l}(f)^{1/(N-l)} + \e)^{n} h_{H} + \g
\end{align}
on $(X\setminus Z)(\QQ) $.
\end{proposition}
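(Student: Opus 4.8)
The plan is to bound $h_{f^{-n}(Y)}$ on a Zariski open set by $(m/r)\,h_{H}$ as soon as $H^{0}(X,\I^{r}\otimes\O_{X}(mH))\neq 0$, where $\I$ is the ideal sheaf of $f^{-n}(Y)$, and then to show that $m/r$ can be taken below $(d_{N-l}(f)^{1/(N-l)}+\e)^{n}$ for $n$ large by feeding the bound of \cref{prop:boundoftaudyn} into \cref{prop:existencevansectau} and invoking log-concavity of dynamical degrees. Throughout set $\l=d_{N-l}(f)^{1/(N-l)}$; since every dynamical degree is $\ge 1$ and $N-l\ge 1$ we have $\l\ge 1$. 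We may assume $Y\neq\emptyset$.

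First I would record the height inequality. Because $Y\subset X_{f}^{\rm back}$, each $f^{n}$ is flat and finite over a neighborhood of $Y$, so $f^{-n}(Y)$ is a proper closed subscheme of $X$, pure of dimension $l$ and regularly embedded; in particular $h_{f^{-n}(Y)}$ is defined, and for any $r\ge 1$ the $r$-th thickening satisfies $h_{V(\I^{r})}=r\,h_{f^{-n}(Y)}+O(1)$ off $f^{-n}(Y)$. A nonzero section of $\I^{r}\otimes\O_{X}(mH)$ injects into $H^{0}(X,\O_{X}(mH))$, hence cuts out an effective Cartier divisor $D\sim mH$ with $V(\I^{r})\subset D$; since $D\neq 0$, $Z:=\Supp D$ is a proper closed subset of $X$. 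By functoriality of subscheme heights, $h_{V(\I^{r})}\le h_{D}+O(1)=m\,h_{H}+O(1)$ on $(X\setminus Z)(\QQ)$, so altogether $h_{f^{-n}(Y)}\le (m/r)\,h_{H}+\g$ on $(X\setminus Z)(\QQ)$ with $\g:=O(1)/r\in\R$.

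Next I would produce suitable $m,r$. Fix once and for all $\e_{0}=\tfrac{1}{2}(H^{N})/N!>0$ and apply \cref{prop:existencevansectau} with $\L=\O_{X}(H)$ to the subscheme $f^{-n}(Y)$: for each $n$ there is $a=a(n)\ge 1$ so that $H^{0}(X,\O_{X}(mH)\otimes\I^{r})\neq 0$ whenever $m,r\ge a$ and $\sum_{i=0}^{l}\frac{\tau_{i}}{i!}(r/m)^{N-i}\le (H^{N})/N!-\e_{0}$, the $\tau_{i}=\tau(f^{-n}(Y)\cap H_{1\cdots i},H_{1\cdots i})$ being computed from a general sequence $H_{1},\dots,H_{l}\in|H|$; I choose this sequence so as to also satisfy the conclusion of \cref{prop:boundoftaudyn} (possible since $\QQ$ is infinite), giving $\tau_{i}\le C\,((f^{n})^{*}(H^{N-i})\cdot H^{i})$ with $C$ \emph{independent of $n$}. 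Now $((f^{n})^{*}(H^{N-i})\cdot H^{i})=(\pi_{2,n}^{*}H^{N-i}\cdot\pi_{1,n}^{*}H^{i})$, whose $n$-th root converges to $d_{N-i}(f)$; and since $d_{0}(f)=1$ and $j\mapsto\log d_{j}(f)$ is concave, $d_{N-i}(f)\le\l^{N-i}$ for $0\le i\le l$. Hence, fixing any $\d\in(0,\e)$, there is $n_{1}$ with $\tau_{i}\le C(\l+\d)^{n(N-i)}$ for all $i\le l$ and $n\ge n_{1}$ (one uses $\l\ge 1$ to pass from $\l^{N-i}+\d$ to $(\l+\d)^{N-i}$). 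Writing $u=(\l+\d)^{n}r/m$, the condition above is then implied by $Ce\,u^{N-l}\le (H^{N})/N!-\e_{0}$, i.e.\ $u\le u_{0}$ for an explicit $u_{0}>0$ depending only on $C,N,l,(H^{N}),\e_{0}$; set $u_{0}'=\min(u_{0},1)$. Taking $r=a(n)$ and $m=\ceil{(\l+\d)^{n}a(n)/u_{0}'}$, both exceed $a(n)$ for $n$ large, the condition is met, and $m/r\le (\l+\d)^{n}/u_{0}'+1\le(\l+\e)^{n}$ once $n$ is large enough (since $(\l+\e)/(\l+\d)>1$). Combined with the first step, this gives the claim for all $n\ge n_{0}$ with $n_{0}$ chosen to accommodate these finitely many ``$n$ large'' requirements.

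The genuinely substantive ingredient is already in hand, namely the $n$-independence of the constant $C$ in \cref{prop:boundoftaudyn} (proved there via the Segre-class computation for $f^{-n}(Y)\cap H_{1\cdots i}$ together with the regular-embedding and blow-up analysis); given that, the only point requiring care is that the threshold $a=a(n)$ produced by \cref{prop:existencevansectau} depends on $n$, so one must check that valid pairs $(m,r)$ with $m/r\le(\l+\e)^{n}$ still exist above it — which they do, since $r$ may be freely enlarged. The rest — pure-dimensionality and regularity of $f^{-n}(Y)$, the identification $((f^{n})^{*}(H^{N-i})\cdot H^{i})=(\pi_{2,n}^{*}H^{N-i}\cdot\pi_{1,n}^{*}H^{i})$, and the log-concavity estimate $d_{N-i}(f)\le d_{N-l}(f)^{(N-i)/(N-l)}$ — is routine.
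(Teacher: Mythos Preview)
Your proof is correct and follows essentially the same approach as the paper: feed the $n$-uniform bound $\tau_i\le C((f^n)^*(H^{N-i})\cdot H^i)$ from \cref{prop:boundoftaudyn} into \cref{prop:existencevansectau}, use log-concavity to control $d_{N-i}(f)$ by $d_{N-l}(f)^{(N-i)/(N-l)}$, and then read off the height inequality from a nonzero section of $\I^r\otimes\O_X(mH)$. The only differences are cosmetic: the paper introduces an auxiliary $\e'$ with $(d_{N-i}(f)+\e')^{1/(N-i)}<\l+\e/2$ and verifies the sum in \cref{prop:existencevansectau} term-by-term, whereas you pass directly to $\tau_i\le C(\l+\d)^{n(N-i)}$ and package the sum via the substitution $u=(\l+\d)^n r/m$; and the paper simply asserts one can take $m/r\le(\l+\e)^n$ with $r/m\le(\l+\e/2)^{-n}$, whereas you write down an explicit choice of $m,r$. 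One small remark: your sentence ``the condition above is then implied by $Ce\,u^{N-l}\le(H^N)/N!-\e_0$'' tacitly assumes $u\le 1$ (needed for $u^{N-i}\le u^{N-l}$), which you only impose a line later via $u_0'=\min(u_0,1)$; the logic is fine once one reads the two sentences together, but it would be cleaner to state the restriction $u\le 1$ upfront.
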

\begin{proof}
Let us fix arbitrary $\e>0$.
By log concavity of dynamical degrees, we have 
$d_{N-i}(f)^{1/(N-i)} \leq d_{N-l}^{1/(N-l)}$ for $i = 0,\dots, l$.
Thus there is $\e'>0$ such that
\begin{align}
( d_{N-i}(f)+ \e' )^{1/(N-i)} <  d_{N-l}(f)^{1/(N-l)}+ \frac{\e}{2} \label{eq:dyndegineq}
\end{align}
for $i=0,\dots,l$.

Let $C > 0$ be as in \cref{prop:boundoftaudyn}.
Then there is $n_{0} \in \Z_{\geq 0}$ such that for all $n \geq n_{0}$ and $i=0,\dots,l$, we have
\begin{align}
C ((f^{n})^{*}(H^{N-i}) \cdot H^{i} )  \leq ( d_{N-i}(f)+ \e')^{n}.
\end{align}

Let $\e_{0} > 0$ be such that $(H^{N})/N! - \e_{0} > 0$. 
Recalling  \cref{eq:condonrm}, we consider the following condition
\begin{align}
\sum_{i=0}^{l} \frac{( d_{N-i}(f)+ \e')^{n}}{i!}  \frac{1}{( d_{N-l}(f)^{1/(N-l)}+ \e/2 )^{n(N-i)} } \leq \frac{(H^{N})}{N!} - \e_{0}.
\end{align}
By \cref{eq:dyndegineq},  enlarging $n_{0}$ if necessary, this inequality holds for all $n \geq n_{0}$.

Now pick arbitrary $n \geq n_{0}$.
We apply \cref{prop:existencevansectau} to $X$, $f^{-n}(Y)$, $\O_{X}(H)$, and $\e_{0}$.
Let $H_{1}, \dots, H_{l} \in |H|$ be a general sequence such that the conclusions of
\cref{prop:existencevansectau}  and \cref{prop:boundoftaudyn} hold.
Let $a$ be the one obtained by \cref{prop:existencevansectau}.
Then for any integers $r,m \geq a$ such that
\begin{align}
\frac{r}{m} \leq \frac{1}{( d_{N-l}(f)^{1/(N-l)}+ \e/2 )^{n} }, \label{eq:rmadmrange}
\end{align}
we have
\begin{align}
&\sum_{i=0}^{l} \frac{\tau(f^{-n}(Y)\cap H_{1\cdots i}, H_{1\cdots i}  )}{i!} \bigg( \frac{r}{m} \bigg)^{N-i}\\
&\leq 
\sum_{i=0}^{l} \frac{( d_{N-i}(f)+ \e')^{n}}{i!}  \frac{1}{( d_{N-l}(f)^{1/(N-l)}+ \e/2 )^{n(N-i)} } \leq \frac{(H^{N})}{N!} - \e_{0},
\end{align}
where $H_{1\cdots i} = H_{1} \cap \cdots \cap H_{i}$.
Thus, if $\I \subset \O_{X}$ is the ideal sheaf of $f^{-n}(Y) \subset X$, we have
\begin{align}
H^{0}(X, \O_{X}(mH) \otimes \I^{r}) \neq 0
\end{align}
whenever $r, m \geq a$ and they satisfy \cref{eq:rmadmrange}.
Therefore there is an effective divisor $D$ linearly equivalent to $mH$ containing 
the closed subscheme defined by $\I^{r}$.
This implies 
\begin{align}
rh_{f^{-n}(Y)} \leq mh_{H} + O(1)
\end{align}
on $(X \setminus D)(\QQ)$.
Taking $r,m \geq a$ so that 
\begin{align}
\frac{m}{r} \leq (d_{N-l}(f)^{1/(N-l)} + \e)^{n},
\end{align} 
we are done.
\end{proof}

\begin{proof}[Proof of \cref{thm:gcdgrowth}]
First we remark that in the case $f$ is a morphism,
we may also assume that $Y$ is pure dimensional and the inclusion $Y \subset X$
is regular embedding. (The condition $Y \subset X_{f}^{\rm back}$ is vacuous since $X_{f}^{\rm back} = X$.)
Indeed, let $\I \subset \O_{X}$ be the ideal sheaf of $Y \subset X$.
Let $\L$ be an ample invertible $\O_{X}$-module such that
$\I \otimes \L$ is globally generated.
Let $\dd \subset |\L|$ be the linear system defined by $H^{0}(X, \I \otimes \L)$.
Then for a general sequence $D_{1},\dots , D_{N-l} \in \dd$, 
we have $Y \subset D_{1} \cap \cdots \cap D_{N-l}$ as subschemes
and $ D_{1} \cap \cdots \cap D_{N-l}$ is pure dimension $l$.
Since $X$ is smooth and hence Cohen-Macaulay, the inclusion $ D_{1} \cap \cdots \cap D_{N-l} \subset X$
is a regular embedding.
As $h_{Y} \leq h_{ D_{1} \cap \cdots \cap D_{N-l}} + O(1)$ on $(X \setminus ( D_{1} \cap \cdots \cap D_{N-l}))(\QQ)$,
replacing $Y$ with $ D_{1} \cap \cdots \cap D_{N-l}$, we may assume $Y$ is pure dimensional and
the inclusion is a regular embedding.

Let $h_{H}$ be any ample height function on $X$.
We may take it so that $h_{H} \geq 1$.
Let us take arbitrary $\e > 0$.
Since $h_{Y}$ is bounded below outside $Y$ and $O_{f}(x)$ is generic,
it is enough to show 
\begin{align}
\limsup_{n \to \infty} \frac{h_{Y}(f^{n}(x))}{h_{H}(f^{n}(x))}  \leq \e.
\end{align}

Pick and fix small $\d > 0$ and $\eta \in (0,1)$ close to $1$ such that
\begin{align}
d_{N-l}(f)^{1/(N-l)} + \d < \eta \alpha_{f}(x).
\end{align}

We apply \cite[Lemma 2.4]{matsuzawa-note-ad-dls}.
Note that by \cite[Theorem 2.2]{matsuzawa-note-ad-dls},
$ \alpha_{f}(x)$ is equal to one of the cohomological Lyapunov exponents of $f$.
Thus in the statement of \cite[Lemma 2.4]{matsuzawa-note-ad-dls}, we can take 
$\mu_{l}(f) = \alpha_{f}(x)$.
Thus there are $C > 0$, $m \in \Z_{\geq 1}$, $i_{0}, s_{0} \in \Z_{\geq 0}$ such that 
\begin{align}
h_{H}((f^{m})^{i+k}(f^{s}(x))) \geq C(\eta \alpha_{f}(x))^{mk} h_{H}((f^{m})^{i}(f^{s}(x))) \label{eq:htgrowthlowerbound}
\end{align}
for all $i \geq i_{0}, s \geq s_{0}$, and $k \geq 0$.

Take and fix $k \geq 0$ such that 
\begin{align}
\frac{ (d_{N-l}(f)^{1/(N-l)} + \d )^{mk} }{C (\eta \alpha_{f}(x))^{mk} } < \e.
\end{align}

By \cref{prop:ggcdpullbackbound}, replacing $k$ larger if necessary,
there are proper closed subset $Z \subset X$ and $\g \in \R_{\geq 0}$ such that
\begin{align}
h_{f^{-mk}(Y)} \leq (d_{N-l}(f)^{1/(N-l)} + \d )^{mk}h_{H} + \g
\end{align}
on $(X \setminus Z)(\QQ)$.

Now for large $n \geq 0$, write $n = mq + s$ with $q \in \Z$ and $s_{0} \leq s \leq s_{0} + m -1$.
Then by the functoriality of $h_{Y}$, we have
\begin{align}
h_{Y}(f^{n}(x)) &= h_{Y}(f^{mq}(f^{s}(x))) \leq h_{f^{-mk}(Y)} ((f^{m})^{q-k}(f^{s}(x))) + \g'\\
& \leq 
(d_{N-l}(f)^{1/(N-l)} + \d )^{mk}h_{H}((f^{m})^{q-k}(f^{s}(x))) + \g + \g'
\end{align}
where $\g' \in \R$ is a constant depends on $f^{mk}$ and $Y$,
provided $n$ is large enough so that
$q \geq k$ and $(f^{m})^{q-k}(f^{s}(x)) \not\in f^{-mk}(Y) \cup Z$.
Here, to get the second condition, we use the genericness of $O_{f}(x)$.

Now by \cref{eq:htgrowthlowerbound}, if $n$ is further large enough so that
$q-k \geq i_{0}$, we get
\begin{align}
\frac{h_{Y}(f^{n}(x))}{h_{H}(f^{n}(x))} 
&\leq 
\frac{ (d_{N-l}(f)^{1/(N-l)} + \d )^{mk} }{C (\eta \alpha_{f}(x))^{mk} }   +  \frac{\g + \g'}{h_{H}(f^{n}(x))}\\
&\leq 
\e +  \frac{\g + \g'}{h_{H}(f^{n}(x))}.
\end{align}
Since $h_{H}(f^{n}(x)) \to \infty$, we get
\begin{align}
\limsup_{n \to \infty} \frac{h_{Y}(f^{n}(x))}{h_{H}(f^{n}(x))}  \leq \e
\end{align}
and we are done.
\end{proof}

\section{Examples}

\begin{example}\label{ex:backnonfinCE}
Let
\begin{align}
f \colon \P^{2}_{\QQ} \dashrightarrow \P^{2}_{\QQ},\ (x:y:z) \mapsto (x^{2}y : y^{3} : z^{3}).
\end{align}
Then $f$ is a dominant rational map with $I_{f}=\{(1:0:0)\}$.
We can see $d_{1}(f) = 3$ and $d_{2}(d) = 6$.
Indeed, restricted on $ {\mathbb{G}}_{m,\QQ}^{2}$, we have the following conjugation:
\begin{equation}
\begin{tikzcd}
 {\mathbb{G}}_{m,\QQ}^{2} \arrow[r, "f"] &  {\mathbb{G}}_{m,\QQ}^{2} \\
  {\mathbb{G}}_{m,\QQ}^{2} \arrow[u] \arrow[r, "g",swap]&  {\mathbb{G}}_{m,\QQ}^{2} \arrow[u]
\end{tikzcd}
\end{equation}
where the vertical allow is $(x,y) \mapsto (xy,y)$ and $g(x,y) = (x^{2},y^{3})$.
Thus $d_{1}(f) = d_{1}(g)=3 $ and $d_{2}(f) = d_{2}(g) = 6$.
Moreover, by this conjugation, it is easy to see that for any $p,q \in \Q$ with $p,q > 1$,
the $f$-orbit of $(pq,q)$ is Zariski dense.
As the dynamical Mordell-Lang conjecture holds for endomorphisms on $\A^{2}_{\QQ}$ \cite{Xi17},
such orbits are automatically generic.

Now let us consider $Y = \{(0:0:1)\}$.
Note that $Y \not\subset (\P^{2}_{\QQ})_{f}^{\rm back}$ since $f|_{\A^{2}}^{-1}(Y) = (y=0)$ as sets.
Let $h$ be the naive height on $\P^{2}_{\QQ}$.
Let us consider a point $P = (3:2:1) \in \P^{2}(\QQ)$.
Then its $f$-orbit is generic.
Explicitly, we have $f^{n}(P) = (3^{2^{n}}2^{3^{n}-2^{n}} : 2^{3^{n}} : 1)$.
Thus
$h(f^{n}(P)) = \log 3^{2^{n}}2^{3^{n}-2^{n}} $ and $ \alpha_{f}(P) = \lim_{n \to \infty} h(f^{n}(P))^{1/n} = 3 > \sqrt{6} = d_{2}(f)^{1/2}$.
But we have
\begin{align}
\frac{h_{Y}(f^{n}(P))}{h(f^{n}(P))} = \frac{\log 2^{3^{n}-2^{n}}}{\log 3^{2^{n}}2^{3^{n}-2^{n}} } \longrightarrow 1
\end{align}
as $n \to \infty$.
This example shows 
the condition $Y \subset X_{f}^{\rm back}$
is necessary in \cref{thm:gcdgrowth}.
\end{example}

\if0
\begin{example}
Let
\begin{align}
f \colon \P^{2}_{\QQ} \dashrightarrow \P^{2}_{\QQ},\ (x:y:z) \mapsto (x^{2}y : y^{3} + x^{2}y : z^{3}).
\end{align}
Then $f$ is a dominant rational map with $I_{f}=\{(1:0:0)\}$.
We can see $d_{1}(f) = 3$ and $d_{2}(d) = 6$.
Indeed, the degree sequence of $f$ is $3^{n}$ and hence $d_{1}(f) = 3$.
For general $(a, b) \in \QQ^{2}$, we can calculate $\# f|_{\A^{2}}^{-1}(a,b) = 6$ and thus $d_{2}(f)=6$.
Here we identify $\A^{2}_{\QQ} = (z \neq 0) \subset \P^{2}_{\QQ}$.

Let $Y = \{(0:0:1)\}$.
Note that $Y \not\subset (\P^{2}_{\QQ})_{f}^{\rm back}$ since $f|_{\A^{2}}^{-1}(Y) = (y=0)$ as sets.

Let $P = (2:3:1) \in \P^{2}(\QQ)$.
Then 
{\color{red} mention KSC ok on $\A^{2}$, find generic orbit, mention Xie's DML and genericness \cite{Xi17}}
\end{example}
\fi

\begin{example}\label{ex:A2}
Let
\begin{align}
f \colon \P^{2}_{\QQ} \dashrightarrow \P^{2}_{\QQ},\ (x:y:z) \mapsto (x^{2}y : y^{3} + x^{2}y+xz^{2} : z^{3}).
\end{align}
Then $f$ is a dominant rational map with $I_{f}=\{(1:0:0)\}$.
We can see $d_{1}(f) = 3$ and $d_{2}(f) = 7$.
Moreover if we identify $\A^{2}_{\QQ} = (z \neq 0) \subset \P^{2}_{\QQ}$,
$f|_{\A^{2}_{\QQ}} \colon \A^{2}_{\QQ} \longrightarrow \A^{2}_{\QQ}$ is a finite morphism.
Therefore, we get $(\P^{2}_{\QQ})_{f}^{\rm back} = \A^{2}_{\QQ}$.
Thus for any $0$-dimensional $Y \subset \A^{2}_{\QQ}$, and $x \in \A^{2}(\QQ)$ with
$O_{f}(x)$ being generic and $ \alpha_{f}(x) > \sqrt{7}$, we have
\begin{align}
\lim_{n \to \infty} \frac{h_{Y}(f^{n}(x))}{h_{H}(f^{n}(x))} = 0,
\end{align}
where $h_{H}$ is any ample height on $\P^{2}_{\QQ}$.
Note that genericness follows from Zariski density since the dynamical Mordell-Lang conjecture 
is known for $\A^{2}_{\QQ}$ \cite{Xi17}.
Note also that if $Y$ is not reduced, we may replace it with its reduced scheme to prove the limit is zero.
If $Y$ is reduced, the inclusion into $\P^{2}_{\QQ}$ is automatically a regular embedding.
\end{example}

\begin{example}
Let $N \geq 2$.
Let $A = (a_{ij}) \in M_{N}(\Z)$ be an $N \times N$ integer matrix with $\det A \neq 0$.
Then $A$ induces a morphism 
\begin{align}
{\mathbb{G}}_{m,\QQ}^{N} \longrightarrow {\mathbb{G}}_{m,\QQ}^{N},
(x_{1} ,\dots , x_{N}) \mapsto (x_{1}^{a_{11}} \cdots x_{N}^{a_{1N}}, \dots x_{1}^{a_{N1}} \cdots x_{N}^{a_{NN}}).
\end{align}
Let $f \colon \P^{N}_{\QQ} \dashrightarrow \P^{N}_{\QQ}$
be the induced rational map, where we identify 
${\mathbb{G}}_{m,\QQ}^{N} \hookrightarrow \P^{N}_{\QQ}, (x_{1},\dots, x_{N}) \mapsto (x_{1}:\cdots : x_{N} : 1)$.
Let us check if the assumptions of \cref{thm:gcdgrowth} hold for this map.

The dynamical degrees of $f$ is calculated as follows.
Let $\l_{1}, \dots , \l_{N}$ be the eigenvalues of $A$ counted with multiplicity and ordered so that
$|\l_{1}| \geq \cdots \geq |\l_{N}|$.
Then we have $d_{i}(f) = |\l_{1} \cdots \l_{i}|$ \cite[Theorem 1]{Lin12dyndeg}.

By \cite{Sil12}, Kawaguchi-Silverman conjecture is known for $f|_{ {\mathbb{G}}_{m,\QQ}^{N}}$.
That is, for any point $x \in {\mathbb{G}}_{m, \QQ}^{N}$ with Zariski dense orbit, 
we have $ \alpha_{f}(x) = d_{1}(f)$.
Moreover, since $f$ is \'etale on $ {\mathbb{G}}_{m,\QQ}^{N}$, the dynamical Mordell-Lang conjecture 
holds for it and thus Zariski dense orbit is automatically generic.

Finally, as $f|_{ {\mathbb{G}}_{m,\QQ}^{N}} \colon  {\mathbb{G}}_{m,\QQ}^{N} \longrightarrow  {\mathbb{G}}_{m,\QQ}^{N}$
is finite, ${\mathbb{G}}_{m,\QQ}^{N}  \subset (\P^{N}_{\QQ})_{f}^{\rm back}$.

Therefore if $|\l_{1}| > |\l_{N}|$, for any point $x \in {\mathbb{G}}_{m,\QQ}^{N} (\QQ)$ with Zariski dense orbit,
we have
\begin{align}
d_{N}(f)^{1/N} = |\l_{1} \cdots \l_{N}|^{1/N} < |\l_{1}| = d_{1}(f)  = \alpha_{f}(x).
\end{align}
Thus for any $0$-dimensional $Y \subset {\mathbb{G}}_{m,\QQ}^{N} $, we have
\begin{align}
\lim_{n \to \infty} \frac{h_{Y}(f^{n}(x))}{h_{H}(f^{n}(x))} = 0,
\end{align}
where $h_{H}$ is any ample height on $\P^{N}_{\QQ}$.
Note we can use the same argument as in \cref{ex:A2} to treat non-reduced $Y$.
\end{example}

When constructing examples, finding explicit generic or Zariski dense orbit is 
often the most difficult part.
Here is one fact that might be helpful.

\begin{proposition}
Let $X$ be a smooth quasi-projective variety over $\QQ$ of dimension $N \geq 2$
and $f \colon X \longrightarrow X$ a quasi-finite morphism.
Suppose $f$ is $1$-cohomologically hyperbolic, i.e.\ $d_{1}(f) > d_{2}(f)$.
Then for any point $x \in X(\QQ)$, if $ \overline{\alpha}_{f}(x) = d_{1}(f)$, then the orbit $O_{f}(x)$
is Zariski dense. 
In particular,  if the dynamical Mordell-Lang conjecture holds for $f$ (e.g.\ if $f$ is \'etale or $X = \A^{2}_{\QQ}$), 
then $O_{f}(x)$ is generic.
\end{proposition}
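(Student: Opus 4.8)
The plan is to argue by contraposition: assuming $O_{f}(x)$ is not Zariski dense, I will produce the strict inequality $\overline{\alpha}_{f}(x) < d_{1}(f)$. First I would reduce to an invariant subvariety carrying a dense orbit. Set $Z=\overline{O_{f}(x)}$, a proper closed subset; since $f(O_{f}(x))\subseteq O_{f}(x)$ and $f$ is a morphism, $f(Z)\subseteq Z$. The map $f$ permutes the irreducible components of $Z$, and by density every component meets $O_{f}(x)$ infinitely often; picking a component $Z_{0}$ of maximal dimension and suitable $k\ge1$, $j\ge0$ with $f^{k}(Z_{0})\subseteq Z_{0}$ and $f^{j}(x)\in Z_{0}$, one checks that the tail $O_{f^{k}}(f^{j}(x))$ is Zariski dense in $Z_{0}$. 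Writing $g=f^{k}$ (still a quasi-finite morphism, with $d_{i}(g)=d_{i}(f)^{k}$, hence still $d_{1}(g)>d_{2}(g)$), $y=f^{j}(x)$, and renaming $Z_{0}$ as $Z$, we have $g(Z)\subseteq Z$, $p:=\dim Z\le N-1$, $g|_{Z}\colon Z\to Z$ a dominant (so, by quasi-finiteness, generically finite) morphism, and $\overline{\alpha}_{g}(y)=\overline{\alpha}_{f}(x)^{k}$. If $p=0$ the orbit of $y$ is eventually a single fixed point, so $\overline{\alpha}_{f}(x)=1<d_{1}(f)$ and we are done; assume $p\ge1$.

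Next I would transfer the arithmetic degree to $Z$. Since $H$ is ample on $X$, its closure on a projective compactification $\overline{Z}$ of $Z$ is ample, so the restriction of a Weil height $h_{H}$ to $Z$ is an ample height there; as the orbit of $y$ lies in $Z$, the quantity $\overline{\alpha}_{g}(y)$ is intrinsic and unchanged when computed on $\overline{Z}$ (pass to a resolution of $\overline{Z}$ so that \cite{Ma20a} applies). Extending $g|_{Z}$ to a dominant rational self-map $\widetilde{g}$ of this smooth projective model and noting that the orbit of $y$, remaining in $Z$, has well-defined $\widetilde{g}$-orbit, the bound $\overline{\alpha}_{\widetilde{g}}(y)\le d_{1}(\widetilde{g})$ of \cite{Ma20a} yields
\[
 d_{1}(f)^{k}=\overline{\alpha}_{g}(y)=\overline{\alpha}_{\widetilde{g}}(y)\le d_{1}(\widetilde{g}).
\]
Hence it suffices to prove $d_{1}(\widetilde{g})<d_{1}(g)=d_{1}(f)^{k}$, which contradicts the above and finishes the proof.

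The heart of the matter, and the step I expect to be the main obstacle, is this strict inequality for the first dynamical degree of the induced self-map on the proper invariant subvariety $\overline{Z}$. First note that $d_{1}(g)>d_{2}(g)$ together with log-concavity of the dynamical degrees (\cite{xie2024algebraic}) forces the ratios $d_{i}(g)/d_{i-1}(g)$ to be non-increasing and $<1$ from $i=2$ on, so $d_{1}(g)>d_{2}(g)>\cdots>d_{N}(g)\ge1$; in particular $\deg(g)=d_{N}(g)<d_{1}(g)$. When $p\le1$ this already suffices, since then $d_{1}(\widetilde{g})$ equals the topological degree of $g|_{Z}$, which is at most $\deg(g)$ because over a general point of $Z$ the fibre of $g|_{Z}$ is contained in that of $g$; thus $d_{1}(\widetilde{g})\le d_{N}(g)<d_{1}(g)$. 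For $p\ge2$ one needs more: the non-strict monotonicity $d_{1}(\widetilde{g})\le d_{1}(g)$ along an invariant subvariety (valid here because $g$ is a quasi-finite morphism on $X$, so the indeterminacy and exceptional loci of its rational extension lie in the boundary, and provable by pushing an invariant positive closed current from $\overline{Z}$ down to a compactification of $X$, in the spirit of Dinh--Nguyen), upgraded to strictness using quasi-finiteness together with $d_{1}(g)>d_{2}(g)$; locating the cleanest form of this restriction inequality for cohomologically hyperbolic endomorphisms in the literature, or proving it in the generality needed, is where the real work lies. Granting $d_{1}(\widetilde{g})<d_{1}(g)$, the contradiction is complete. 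Finally, the ``in particular'' assertion is immediate from the dynamical Mordell--Lang property: were $O_{f}(x)\cap V$ infinite for some proper closed $V$, then $\{n:f^{n}(x)\in V\}$ would contain an infinite arithmetic progression, so a tail of $O_{f}(x)$ (up to finitely many leading terms) would lie in $\bigcup_{j}\overline{f^{j}(V)}$, a proper closed subset, contradicting Zariski density.
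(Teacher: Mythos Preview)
Your overall architecture matches the paper's exactly: argue by contraposition, pass to an irreducible component $Z$ of $\overline{O_{f}(x)}$ with $f^{k}(Z)\subset Z$ dominant and $f^{s}(x)\in Z$, and then chain
\[
\overline{\alpha}_{f}(x)=\overline{\alpha}_{f^{k}}(f^{s}(x))^{1/k}\le d_{1}(f^{k}|_{Z})^{1/k}
\]
using the general bound of the upper arithmetic degree by the first dynamical degree on (a resolution of) $Z$. The paper cites \cite[Lemma~2.7]{MW22} and \cite[Proposition~2.6]{MW22} for these two steps, which is precisely what you sketch.

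The gap is exactly the step you flag as ``the heart of the matter'': the strict inequality $d_{1}(f^{k}|_{Z})<d_{1}(f^{k})$. Your proposed route via positive closed currents and a Dinh--Nguyen-type monotonicity, then ``upgrading to strictness'', is vague and you do not actually carry it out; as written the proof is incomplete. The paper's resolution is much more direct and avoids currents entirely: it invokes \cite[Lemma~2.3]{MW22}, which for a quasi-finite self-morphism gives
\[
d_{1}(f^{k}|_{Z})\ \le\ d_{1+\codim Z}(f^{k}).
\]
You already observed (correctly) that $d_{1}(f)>d_{2}(f)$ plus log-concavity forces $d_{1}(f)>d_{2}(f)>\cdots>d_{N}(f)$, so $d_{1+\codim Z}(f)<d_{1}(f)$ whenever $\codim Z\ge 1$; combined with $d_{i}(f^{k})=d_{i}(f)^{k}$ this yields the desired strict inequality in one line. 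In other words, the missing ingredient is not a delicate current-theoretic argument but the single algebraic inequality $d_{1}(g|_{Z})\le d_{1+\codim Z}(g)$ for quasi-finite $g$, and once you quote it your proof is complete and coincides with the paper's. Your separate treatment of the cases $p\le 1$ then becomes unnecessary.
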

Here $ \overline{\alpha}_{f}(x) := \limsup_{n \to \infty} \max\{1 , h_{H}(f^{n}(x))\}^{1/n}$
for any ample height function $h_{H}$ on a projectivization of $X$.
\begin{proof}
The proof is a minor modification of the proof of \cite[Proposition 4.6]{MW22}.
Suppose $O_{f}(x)$ is not Zariski dense.
Then there is an irreducible component $Z$ of $ \overline{O_{f}(x)}$ such that
$f^{k}(Z) \subset Z$ and $f^{k} \colon Z \longrightarrow Z$ is dominant for some $k \geq 1$.
Let $s \geq 0$ be such that $f^{s}(x) \in Z$.
Then
\begin{align}
\overline{\alpha}_{f}(x) = \overline{\alpha}_{f^{k}}(f^{s}(x))^{1/k}  \leq d_{1}(f^{k}|_{Z})^{1/k} \leq d_{1 + \codim Z}(f^{k})^{1/k} < d_{1}(f),
\end{align}
which is a contradiction.
Here for the first equality, we use \cite[Lemma 2.7]{MW22}.
For the first inequality, see \cite[Proposition 2.6]{MW22}.
The second inequality follows from \cite[Lemma 2.3]{MW22}.
The last inequality follows from the fact $d_{1 + \codim Z}(f^{k})^{1/k} = d_{1 + \codim Z}(f)$, the assumption $d_{2}(f) < d_{1}(f)$,
and the log concavity of dynamical degrees.
\end{proof}

\begin{example}
Let $f \colon X \dashrightarrow X$ be a birational map on a smooth projective variety $X$ over $\QQ$
of dimension $N \geq 3$.
Let $Y \subset X$ be a smooth subvariety of dimension $l$.
Let $U, V \subset X$ be non-empty open subset such that $f$ induces an isomorphism $U \longrightarrow V$.
If $f^{-n}(Y) \subset V$ for all $n \geq 0$, then $Y \subset X_{f}^{\rm back}$.
For example, if $f$ is a pseudo-automorphism (i.e.\ isomorphic in codimension one), then
this could hold for reasonably many $Y$ even if $\dim Y > 0$.
For examples of such maps, see \cite{Bl13,PZ14,BK14,OT15} for instance.

In this case, for any $x \in X_{f}(\QQ)$ such that $O_{f}(x)$ is generic and
$d_{N-l}(f)^{1/(N-l)} < \alpha_{f}(x) $, we have
\begin{align}
\lim_{n \to \infty} \frac{h_{Y}(f^{n}(x))}{h_{H}(f^{n}(x))} = 0,
\end{align}
where $h_{H}$ is any ample height on $X$.
\end{example}

\begin{question}
Let $N \geq 3$ and $1 \leq  l \leq N-2 $.
Is there a dominant rational map $f \colon \P^{N}_{\QQ} \dashrightarrow \P^{N}_{\QQ}$
with the following properties?
\begin{itemize}
\item
There are open subsets $U, V \subset \P^{N}_{\QQ}$ such that
$\codim (\P^{N}_{\QQ} \setminus V) \geq l+1$, 
$U \cap I_{f} =  \emptyset$, 
and $f|_{U} \colon U \longrightarrow V$ is finite.

\item
$d_{N-l}(f)^{1/(N-l)} < d_{1}(f)$.
\end{itemize}
For such a map, it is reasonable to expect that we can apply \cref{thm:gcdgrowth}
in a meaningful way.
\end{question}

\bibliographystyle{acm}
\bibliography{growth_ggcd}

\begin{thebibliography}{10}

\bibitem{BARRIOS_2025}
{\sc BARRIOS, B.}
\newblock An upper bound for the generalised greatest common divisor of
  rational points.
\newblock {\em Bulletin of the Australian Mathematical Society\/} (2025), 1--6.

\bibitem{BK14}
{\sc Bedford, E., and Kim, K.}
\newblock Dynamics of (pseudo) automorphisms of 3-space: periodicity versus
  positive entropy.
\newblock {\em Publ. Mat. 58}, 1 (2014), 65--119.

\bibitem{Bl13}
{\sc Blanc, J.}
\newblock Dynamical degrees of (pseudo)-automorphisms fixing cubic
  hypersurfaces.
\newblock {\em Indiana Univ. Math. J. 62}, 4 (2013), 1143--1164.

\bibitem{BG06}
{\sc Bombieri, E., and Gubler, W.}
\newblock {\em Heights in {D}iophantine geometry}, vol.~4 of {\em New
  Mathematical Monographs}.
\newblock Cambridge University Press, Cambridge, 2006.

\bibitem{BCZ03}
{\sc Bugeaud, Y., Corvaja, P., and Zannier, U.}
\newblock An upper bound for the {G}.{C}.{D}.\ of {$a^n-1$} and {$b^n-1$}.
\newblock {\em Math. Z. 243}, 1 (2003), 79--84.

\bibitem{Da20}
{\sc Dang, N.-B.}
\newblock Degrees of iterates of rational maps on normal projective varieties.
\newblock {\em Proc. Lond. Math. Soc. (3) 121}, 5 (2020), 1268--1310.

\bibitem{joinintersec}
{\sc Flenner, H., O'Carroll, L., and Vogel, W.}
\newblock {\em Joins and intersections}.
\newblock Springer Monographs in Mathematics. Springer-Verlag, Berlin, 1999.

\bibitem{fulintersec}
{\sc Fulton, W.}
\newblock {\em Intersection theory}, second~ed., vol.~2 of {\em Ergebnisse der
  Mathematik und ihrer Grenzgebiete. 3. Folge. A Series of Modern Surveys in
  Mathematics [Results in Mathematics and Related Areas. 3rd Series. A Series
  of Modern Surveys in Mathematics]}.
\newblock Springer-Verlag, Berlin, 1998.

\bibitem{GP24}
{\sc Garcia-Fritz, N., and Pasten, H.}
\newblock A criterion for nondensity of integral points.
\newblock {\em Bulletin of the London Mathematical Society 56}, 6 (2024),
  1939--1950.

\bibitem{HartshorneAG}
{\sc Hartshorne, R.}
\newblock {\em Algebraic geometry}, vol.~No. 52 of {\em Graduate Texts in
  Mathematics}.
\newblock Springer-Verlag, New York-Heidelberg, 1977.

\bibitem{HS00}
{\sc Hindry, M., and Silverman, J.~H.}
\newblock {\em Diophantine geometry}, vol.~201 of {\em Graduate Texts in
  Mathematics}.
\newblock Springer-Verlag, New York, 2000.
\newblock An introduction.

\bibitem{Hu20}
{\sc Huang, K.}
\newblock Generalized greatest common divisors for orbits under rational
  functions.
\newblock {\em Monatsh. Math. 191}, 1 (2020), 103--123.

\bibitem{KS16a}
{\sc Kawaguchi, S., and Silverman, J.~H.}
\newblock Dynamical canonical heights for {J}ordan blocks, arithmetic degrees
  of orbits, and nef canonical heights on abelian varieties.
\newblock {\em Trans. Amer. Math. Soc. 368}, 7 (2016), 5009--5035.

\bibitem{La83}
{\sc Lang, S.}
\newblock {\em Fundamentals of {D}iophantine geometry}.
\newblock Springer-Verlag, New York, 1983.

\bibitem{Lazpos1}
{\sc Lazarsfeld, R.}
\newblock {\em Positivity in algebraic geometry. {I}}, vol.~48 of {\em
  Ergebnisse der Mathematik und ihrer Grenzgebiete. 3. Folge. A Series of
  Modern Surveys in Mathematics [Results in Mathematics and Related Areas. 3rd
  Series. A Series of Modern Surveys in Mathematics]}.
\newblock Springer-Verlag, Berlin, 2004.
\newblock Classical setting: line bundles and linear series.

\bibitem{Lazpos2}
{\sc Lazarsfeld, R.}
\newblock {\em Positivity in algebraic geometry. {II}}, vol.~49 of {\em
  Ergebnisse der Mathematik und ihrer Grenzgebiete. 3. Folge. A Series of
  Modern Surveys in Mathematics [Results in Mathematics and Related Areas. 3rd
  Series. A Series of Modern Surveys in Mathematics]}.
\newblock Springer-Verlag, Berlin, 2004.
\newblock Positivity for vector bundles, and multiplier ideals.

\bibitem{Lin12dyndeg}
{\sc Lin, J.-L.}
\newblock Pulling back cohomology classes and dynamical degrees of monomial
  maps.
\newblock {\em Bull. Soc. Math. France 140}, 4 (2012), 533--549.

\bibitem{Liu}
{\sc Liu, Q.}
\newblock {\em Algebraic geometry and arithmetic curves}, vol.~6 of {\em Oxford
  Graduate Texts in Mathematics}.
\newblock Oxford University Press, Oxford, 2002.
\newblock Translated from the French by Reinie Ern\'e, Oxford Science
  Publications.

\bibitem{MatRing}
{\sc Matsumura, H.}
\newblock {\em Commutative ring theory}, second~ed., vol.~8 of {\em Cambridge
  Studies in Advanced Mathematics}.
\newblock Cambridge University Press, Cambridge, 1989.
\newblock Translated from the Japanese by M. Reid.

\bibitem{matsuzawa2020height}
{\sc Matsuzawa, Y.}
\newblock Height functions associated with closed subschemes.
\newblock {\em arXiv preprint arXiv:2008.08153\/} (2020).

\bibitem{Ma20a}
{\sc Matsuzawa, Y.}
\newblock On upper bounds of arithmetic degrees.
\newblock {\em Amer. J. Math. 142}, 6 (2020), 1797--1820.

\bibitem{matsuzawa2020vojta}
{\sc Matsuzawa, Y.}
\newblock Vojta's conjecture, heights associated with subschemes, and primitive
  prime divisors in arithmetic dynamics.
\newblock {\em arXiv preprint arXiv:2012.04693\/} (2020).

\bibitem{matsuzawa2023recent}
{\sc Matsuzawa, Y.}
\newblock Recent advances on kawaguchi-silverman conjecture.
\newblock {\em arXiv preprint arXiv:2311.15489\/} (2023).

\bibitem{matsuzawa-note-ad-dls}
{\sc Matsuzawa, Y.}
\newblock Existence of arithmetic degrees for generic orbits and dynamical
  {L}ang--{S}iegel problem.
\newblock {\em Journal f\"{u}r die reine und angewandte Mathematik (Crelles
  Journal)\/} (2025).

\bibitem{MW22}
{\sc Matsuzawa, Y., and Wang, L.}
\newblock Arithmetic degrees and zariski dense orbits of cohomologically
  hyperbolic maps.
\newblock {\em to appear in Trans. Amer. Math. Soc.\/} (2022).

\bibitem{OT15}
{\sc Oguiso, K., and Truong, T.~T.}
\newblock Explicit examples of rational and {C}alabi-{Y}au threefolds with
  primitive automorphisms of positive entropy.
\newblock {\em J. Math. Sci. Univ. Tokyo 22}, 1 (2015), 361--385.

\bibitem{PZ14}
{\sc Perroni, F., and Zhang, D.-Q.}
\newblock Pseudo-automorphisms of positive entropy on the blowups of products
  of projective spaces.
\newblock {\em Math. Ann. 359}, 1-2 (2014), 189--209.

\bibitem{Sil87subht}
{\sc Silverman, J.~H.}
\newblock Arithmetic distance functions and height functions in {D}iophantine
  geometry.
\newblock {\em Math. Ann. 279}, 2 (1987), 193--216.

\bibitem{Sil12}
{\sc SILVERMAN, J.~H.}
\newblock Dynamical degree, arithmetic entropy, and canonical heights for
  dominant rational self-maps of projective space.
\newblock {\em Ergodic Theory and Dynamical Systems 34}, 2 (2012), 647--678.

\bibitem{silverman2024dynamical}
{\sc Silverman, J.~H.}
\newblock Dynamical degrees, arithmetic degrees, and canonical heights:
  History, conjectures, and future directions.
\newblock {\em arXiv preprint arXiv:2408.01559\/} (2024).

\bibitem{Tr20}
{\sc Truong, T.~T.}
\newblock Relative dynamical degrees of correspondences over a field of
  arbitrary characteristic.
\newblock {\em J. Reine Angew. Math. 758\/} (2020), 139--182.

\bibitem{Xi17}
{\sc Xie, J.}
\newblock The dynamical {M}ordell-{L}ang conjecture for polynomial
  endomorphisms of the affine plane.
\newblock {\em Ast\'{e}risque}, 394 (2017), vi+110.

\bibitem{xie2024algebraic}
{\sc Xie, J.}
\newblock Algebraic dynamics and recursive inequalities.
\newblock {\em arXiv preprint arXiv:2402.12678\/} (2024).

\end{thebibliography}

\end{document}